\documentclass[11pt]{article}
\usepackage[margin=1in]{geometry}
\usepackage{amsmath,amssymb,amsthm,mathtools}
\usepackage{booktabs}
\usepackage{microtype}
\usepackage{enumitem}
\usepackage{xcolor}
\usepackage{array}
\usepackage[ruled,vlined,algo2e]{algorithm2e}
\newcommand{\algcomment}[1]{\hspace{0.45em}{\color{gray}\footnotesize\normalfont/* #1 */}}
\usepackage{float}
\usepackage{cite}
\usepackage[hidelinks]{hyperref}
\usepackage[nameinlink,noabbrev]{cleveref}
\crefname{equation}{Eq.}{Eqs.}
\Crefname{equation}{Eq.}{Eqs.}
\let\parentheticaleqref\eqref
\renewcommand{\eqref}[1]{Eq.~\parentheticaleqref{#1}}

\newtheorem{theorem}{Theorem}[section]
\newtheorem{proposition}[theorem]{Proposition}
\newtheorem{lemma}[theorem]{Lemma}
\newtheorem{corollary}[theorem]{Corollary}
\newtheorem{remark}[theorem]{Remark}
\newtheorem{definition}[theorem]{Definition}
\newcommand{\T}{\mathbb T}

\newcommand{\ip}[2]{\left\langle #1,#2\right\rangle}
\newcommand{\norm}[1]{\left\lVert #1\right\rVert}
\newcommand{\TV}{\operatorname{TV}}

\newcommand{\proofstep}[1]{\par\medskip\noindent\textit{#1}\enspace}

\title{Successive Schur--Riesz Analysis for Approximation\\
\large Quotient Stability, Exact Enrichment, and Numerical Realizations}
\author{Matthew Dixon}
\date{August 2026}

\begin{document}
\maketitle

\begin{abstract}
Many approximation methods enlarge a trial space by adjoining function blocks
generated by different operators.  Exact redundancy and strong cross-level
interaction can make coefficients nonunique and render pairwise or
diagonal-dominance tests needlessly pessimistic.  For
\(V_m=\sum_{\ell\leq m}S_\ell(E_\ell)\) in a Hilbert space $\mathcal H$, we quotient
coefficients representing the same function and control successive orthogonal
innovations to obtain Riesz bounds independent of \(m\).  The setting includes
factored operators \(S_\ell=T_\ell\circ\cdots\circ T_1:E_\ell\to\mathcal H\),
with compatible intermediate spaces, but the theorem allows arbitrary bounded
\(S_\ell\).  The same constants control approximation, truncation,
perturbation, and levelwise error.  A block Schur complement identifies the
intrinsic new dimension and gives the exact reduction in squared
best-approximation error, leading to a constructive enrichment procedure.
Nonstationary and lifted examples give positive intrinsic bounds where
diagonal-dominance estimates are negative or labelled Gram matrices are
singular; adaptive and recycled-subspace calculations illustrate the distinct
roles of representation stability and application-specific utility.

\end{abstract}

\section{Introduction}

Many approximation spaces are formed by successively adjoining subspaces
spanned by functions produced by different operators.  Examples include
multiscale and subdivision constructions, dynamical sampling, adaptive
wavelets, compatible finite elements, operator-adapted bases, and successive
network layers
\cite{AldroubiEtAl2017,CohenDahmenDeVore2001,Cotter2023,DynLevin2002,
OwhadiScovel2019,DeVoreHaninPetrova2021,KidgerLyons2020}.  A ubiquitous
challenge is that coefficients may be nonunique while later subspaces interact
strongly with earlier ones.  Pairwise coherence and diagonal dominance can
then confuse exact redundancy with genuine instability.

\paragraph{Problem statement.}
If the admitted functions include $f$ and $2f$, the coefficient pairs $(1,0)$
and $(-1,1)$ represent the same approximant.  This exact nonuniqueness is
harmless.  After it is removed, however, genuinely different coefficient
classes should neither nearly cancel nor become progressively ill-conditioned
as more blocks are added.  Testing the complete Gram operator after every
enlargement detects failure but does not attribute it locally.  We therefore
seek an incremental criterion that removes null coefficients, isolates what a
new block contributes beyond the preceding space, controls the accumulation of
accepted blocks, and separates stable novelty from usefulness for a target.

\subsection{Literature review}
\label{sec:literature}

Riesz-basis and frame theory provides the natural language for stable linear
representation: coefficient perturbations are controlled in the represented
function norm, while frames permit redundancy \cite{Christensen2016}.  Fusion
frames extend this viewpoint from individual functions to subspaces, and
operator representations expose their block structure
\cite{BalazsEtAl2020,CasazzaKutyniokLi2008}.  These theories characterize a
prescribed family or decomposition, but do not by themselves give an
incremental criterion for successively admitted singular blocks with constants
uniform in the number of levels.

Block elimination addresses a different part of the problem.  Generalized
Schur complements measure a block after the preceding blocks have been removed
\cite{TarcsayTitkos2020}, whereas block Gershgorin estimates control raw
cross-interactions \cite{FeingoldVarga1962}.  The latter may give a negative
lower estimate for a stable represented family.  A one-step Schur complement,
however, does not establish that local elimination removes exactly the same
redundancy as the global synthesis quotient, nor that successive acceptable
steps remain uniformly stable.  That requires a compatible sequential
factorization.

Multiscale approximation shows why singular labelled descriptions should not
simply be excluded.  Tight and dual frames use redundancy to retain
localization, symmetry, or perfect reconstruction
\cite{ChuiHeStockler2004,Han2010}; lifting constructs new multiresolution
families from local updates \cite{CohenDyn1996,DaubechiesSweldens1998,Sweldens1998};
and nonstationary or lifted multiwavelets require stability analysis beyond a
single stationary Gram matrix \cite{MaesBultheel2008}.  Related hierarchical
principles appear in compatible finite elements, operator-adapted wavelets, and
Galerkin variational integrators
\cite{Cotter2023,HallLeok2015,LeimkuhlerReich2004,OwhadiScovel2019}.  These
constructions motivate structured generating blocks; the present question is
how to test their intrinsic stability after exact cross-level relations have
been removed.

Approximation theory supplies the complementary notion of target utility.
Nonlinear and greedy approximation measure error against effective degrees of
freedom and select useful directions
\cite{DeVore1998,Temlyakov1999,Temlyakov2008}; reduced-basis and adaptive
wavelet methods connect residual selection to convergence and approximation
classes
\cite{CohenDahmenDeVore2001,CohenDahmenDeVore2003,
DeVorePetrovaWojtaszczyk2013}.  Augmented and recycled Krylov methods similarly
reuse subspaces to accelerate related linear systems or reduced models
\cite{Bai2002,ParksEtAl2006,SoodhalterEtAl2020}.  These methods evaluate
usefulness, but do not generally begin by quotienting a redundant candidate
block and measuring the stability of its intrinsic innovation.  The missing
link is therefore a local object that handles redundancy and stability while
also attributing approximation gain.

\paragraph{Proposed approach and novelty.}
We prove three linked results.  First, the global synthesis quotient is
compatible with successive orthogonal innovations, including singular labelled
blocks.  Second, a block $LDL^*$ factorization converts local innovation and
elimination estimates into depth-uniform Riesz bounds, with approximation,
truncation, perturbation, and levelwise-error consequences.  Third, the same
Schur residual identifies intrinsic new dimension and the exact reduction in
squared best-approximation error.  Thus the object that removes redundancy also
measures stability, attributes gain, and supports a constructive enrichment
rule.  The theorem concerns arbitrary bounded generating operators; functional
composition, Galerkin hierarchies, and multilevel constructions are important
instances rather than assumptions.

\paragraph{Overview.}
Section~\ref{sec:riesz} develops the quotient--innovation theory,
Section~\ref{sec:algorithm} turns it into a numerical enrichment procedure, and
Section~\ref{sec:numerics} tests its stability and gain claims in nonstationary,
lifted, adaptive, and recycled-subspace examples.  Secondary results and audit
details are placed in the appendices.

\section{Successive quotient--innovation Riesz analysis}
\label{sec:riesz}

We derive stability bounds for successively enlarged approximation spaces that
are uniform in the number of levels and intrinsic under coefficient
nonuniqueness.  The construction separates exact redundancy from
near-dependence, isolates the contribution of each new block, and controls the
accumulation of cross-level interactions.

\subsection{Preliminaries}

Let $S_\ell:E_\ell\to\mathcal H$ be arbitrary bounded generating operators,
let
\[
 E^{(m)}=\bigoplus_{\ell=1}^mE_\ell,
 \quad
 S^{(m)}c=\sum_{\ell=1}^mS_\ell c_\ell,
 \quad
 V_m=S^{(m)}(E^{(m)}),
\]
where $\mathcal H$ is the Hilbert space in which approximation error is
measured and $E_\ell$ contains the coefficients introduced at level $\ell$.
The map $S_\ell$ converts those coefficients into functions.  Hence
\[
 V_m=\left\{\sum_{\ell=1}^mS_\ell c_\ell:c_\ell\in E_\ell\right\}
\]
is the linear approximation space generated through level $m$, and
$V_1\subseteq V_2\subseteq\cdots$.  The superscript $(m)$ denotes the
assembly of the first $m$ coefficient blocks, whereas the subscript on $V_m$
labels the resulting nested space.

Different coefficient vectors may determine the same element of $V_m$.
After identifying these vectors, the contribution of the newly adjoined range
is measured relative to $V_{m-1}$.

\subsection{Intrinsic coefficients and successive orthogonal residuals}
\label{sec:block}

Two reductions are required before a stability theorem can be stated.
Quotienting comes first: it removes coefficient changes that produce no change
in the represented function.  Orthogonal residualization comes second: it asks
what remains of level $\ell$ after everything already representable at earlier
levels has been removed.

\paragraph{Removing exact redundancy.}
The first issue appears before one asks whether the growing spaces are well
conditioned.  Two coefficient vectors can differ while representing the same
function.  The Gram operator
$G^{(m)}=(S^{(m)})^*S^{(m)}$ records the norm of every represented function:
$\norm{S^{(m)}c}^2=\ip{G^{(m)}c}{c}$.  Because $G^{(m)}$ is singular whenever
coefficients are exactly redundant, we work on the intrinsic coefficient
space
\[
 \widehat E^{(m)}=E^{(m)}/\ker S^{(m)},\quad
 \norm{[c]}=\inf_{z\in\ker S^{(m)}}\norm{c+z}.
\]
Here
\[
 \ker S^{(m)}=\{c\in E^{(m)}:S^{(m)}c=0\}
\]
is the subspace of coefficient changes that leave the represented function
unchanged.  Thus $[c]$ contains all coefficient vectors representing the same
function as $c$, and its norm is the norm of the shortest such representative.
Nothing is removed from $V_m$; only duplicate coefficient descriptions are
identified.

\paragraph{Isolating what the new level contributes.}
After exact redundancy has been removed, the next question is geometric.  A
new block $S_\ell(E_\ell)$ may point partly into the preceding space
$V_{\ell-1}$ and partly outside it.  Let $P_{\ell-1}$ be the orthogonal
projection onto $V_{\ell-1}$ and define
\[
 R_\ell=(I-P_{\ell-1})S_\ell.
\]
The vector $R_\ell b$ is exactly the part of $S_\ell b$ that cannot already
be represented at earlier levels.  Its Gram operator is
$D_\ell=R_\ell^*R_\ell$, and the projection theorem gives
\begin{equation}
 \ip{D_\ell b}{b}
 =\inf_{a\in E^{(\ell-1)}}
   \norm{S_\ell b-S^{(\ell-1)}a}_{\mathcal H}^2.
 \label{eq:innovationdistance}
\end{equation}
Thus $D_\ell$ measures the squared distance of the new contribution from
everything admitted before.

For computation, assume that the preceding synthesis operator has closed
range, so its Moore--Penrose inverse is bounded, and write
$G_{<\ell,<\ell}$ for the Gram operator of the preceding levels.  Eliminating
the best preceding coefficient in \eqref{eq:innovationdistance} gives the
generalized Schur complement
\begin{equation}
 D_\ell=G_{\ell\ell}
 -G_{\ell,<\ell}G_{<\ell,<\ell}^{\dagger}G_{<\ell,\ell},
 \label{eq:generalizedinnovation}
\end{equation}
where $\dagger$ is the Moore--Penrose inverse on the closed range.  The Schur
complement is therefore the algebraic form of orthogonally removing the
preceding approximation space, not a separate construction.
Thus $\ker D_\ell$ is precisely the part of the new generating family already represented
by the preceding span.  We quotient it out and retain
$\widehat E_\ell=E_\ell/\ker D_\ell$.

\paragraph{Reconciling local and global descriptions.}
We now have two reductions: one global quotient for all coefficients through
level $m$, and one local quotient at each level.  The next lemma proves that
they discard exactly the same redundancy.  This is the bridge that permits
local innovation estimates to control the complete growing space.

\begin{lemma}[Compatibility of global and successive quotients]
\label{lem:compatibility}
Assume $V_m=S^{(m)}(E^{(m)})$ is closed for every $m$, with
$V_0=\{0\}$, and put $W_m=V_m\ominus V_{m-1}$.  Let $P_{m-1}$ be
the orthogonal projection onto $V_{m-1}$ and define
\[
 R_m=(I-P_{m-1})S_m:E_m\to W_m.
\]
Then $D_m=R_m^*R_m$ equals \eqref{eq:generalizedinnovation}, and $R_m$
induces a bounded Hilbert-space isomorphism
$\widehat R_m:E_m/\ker D_m\to W_m$.  In particular,
\begin{equation}
 \ker D_m=\ker R_m
 =\{b\in E_m:S_mb\in V_{m-1}\}.
 \label{eq:innovationkernel}
\end{equation}
Moreover,
\[
 V_m=W_1\mathbin{\mathop\oplus\limits^\perp}\cdots
 \mathbin{\mathop\oplus\limits^\perp}W_m.
\]
Consequently there is a unique bounded isomorphism
\[
 C_m:E^{(m)}/\ker S^{(m)}
 \longrightarrow \bigoplus_{\ell=1}^m E_\ell/\ker D_\ell
\]
that assigns to each represented function its successive orthogonal-residual
coordinates.  It is triangular with identity diagonal blocks.  Hence, with
$L_m=C_m^*$,
\[
 \widehat G^{(m)}=L_m\mathcal D_mL_m^*,
 \quad \mathcal D_m=\operatorname{diag}(D_1,\ldots,D_m).
\]
The construction is compatible under truncation: the leading $k$ blocks of
$C_m$ agree with $C_k$ for $k\leq m$.
\end{lemma}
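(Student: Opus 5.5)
We argue level by level, using only the closedness of each $V_m$ and the projection theorem. First, the Schur-complement formula. Since $V_{m-1}=\operatorname{ran}S^{(m-1)}$ is closed, $S^{(m-1)}$ has a bounded Moore--Penrose inverse, and $P_{m-1}=S^{(m-1)}(S^{(m-1)})^{\dagger}$. Write $G_{<m,<m}=(S^{(m-1)})^*S^{(m-1)}$ and $G_{<m,m}=(S^{(m-1)})^*S_m$. The identity $(S^{(m-1)})^{\dagger}((S^{(m-1)})^{\dagger})^*=G_{<m,<m}^{\dagger}$ then gives
\[
 P_{m-1}=S^{(m-1)}G_{<m,<m}^{\dagger}(S^{(m-1)})^*.
\]
Because $I-P_{m-1}$ is an orthogonal projection, $D_m=S_m^*(I-P_{m-1})S_m=G_{mm}-G_{m,<m}G_{<m,<m}^{\dagger}G_{<m,m}$, which is \eqref{eq:generalizedinnovation}. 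The kernel identity \eqref{eq:innovationkernel} follows from $\ip{D_mb}{b}=\norm{R_mb}^2$: this gives $\ker D_m=\ker R_m$, and $R_mb=0$ holds exactly when $S_mb\in V_{m-1}$.

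Next, the orthogonal decomposition and the local isomorphism. Every element of $V_m$ has the form $S^{(m-1)}a+S_mb$. Applying $I-P_{m-1}$ sends it to $R_mb$, so $W_m=V_m\ominus V_{m-1}=(I-P_{m-1})V_m=\operatorname{ran}R_m$. This range is closed because $W_m$ is the orthogonal complement of a closed subspace inside a closed space. Induction on $m$ gives $V_m=W_1\oplus^\perp\cdots\oplus^\perp W_m$. The map $R_m:E_m\to W_m$ is bounded and surjective, so it descends to a bounded bijection $\widehat R_m:E_m/\ker R_m\to W_m$. The open mapping theorem makes its inverse bounded, so $\widehat R_m$ is an isomorphism. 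The quotient is a Hilbert space, identified with $(\ker D_m)^\perp$.

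The global map $C_m$ is the step I expect to need the most care, since it must be shown well defined on $E^{(m)}/\ker S^{(m)}$ and triangular. Let $Q_\ell$ denote the orthogonal projection onto $W_\ell$. For $f=S^{(m)}c$ I define
\[
 C_m[c]=\bigl(\widehat R_1^{-1}Q_1f,\ldots,\widehat R_m^{-1}Q_mf\bigr).
\]
This depends only on $f$, so it is well defined on the global quotient. It is injective because $f=\sum_\ell Q_\ell f$. It is surjective because, given classes $[b_\ell]$, the function $f=\sum_\ell R_\ell b_\ell$ lies in $V_m$ and $Q_\ell f=R_\ell b_\ell$. Boundedness comes from writing $C_m=(\bigoplus_\ell\widehat R_\ell^{-1}Q_\ell)\circ\widehat S^{(m)}$, where $\widehat S^{(m)}:E^{(m)}/\ker S^{(m)}\to V_m$ is itself an isomorphism by the open mapping theorem.

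Triangularity with identity diagonal blocks follows from the observation that $S_\ell c_\ell\in V_\ell$ is orthogonal to $W_k$ for $k>\ell$, while $Q_\ell S_\ell c_\ell=R_\ell c_\ell$ because $S_\ell c_\ell-R_\ell c_\ell\in V_{\ell-1}$. So the $(\ell,\ell)$ entry is $[c_\ell]\mapsto[c_\ell]$ and entries above the diagonal vanish. Uniqueness holds because any map producing successive orthogonal-residual coordinates must agree with $Q_\ell$ on each $f$.

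For the factorization, note that $\norm{S^{(m)}c}^2=\sum_\ell\norm{Q_\ell f}^2=\sum_\ell\ip{D_\ell b_\ell}{b_\ell}$ with $(b_\ell)=C_m[c]$, using $\norm{R_\ell b}^2=\ip{D_\ell b}{b}$. Polarizing gives $\widehat G^{(m)}=C_m^*\mathcal D_mC_m=L_m\mathcal D_mL_m^*$. Compatibility under truncation holds because $Q_\ell$ and $\widehat R_\ell$ for $\ell\le k$ do not depend on $m$, and restricting to coefficients supported on the first $k$ blocks gives $f\in V_k$, so the leading blocks reproduce $C_k$.
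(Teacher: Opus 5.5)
Your proof is correct and follows essentially the same route as the paper. In both, $D_m$ is identified as the generalized Schur complement, surjectivity of $R_m$ onto $W_m$ together with the bounded inverse theorem gives $\widehat R_m$, and induction gives the orthogonal sum; $C_m$ is then the global synthesis isomorphism composed with the inverse of $(b_\ell)\mapsto\sum_\ell\widehat R_\ell b_\ell$, which is exactly your explicit formula through the $Q_\ell$, and the Gram factorization follows from Pythagoras. One cosmetic slip: since $Q_kS_\ell c_\ell=0$ for $k>\ell$, it is the blocks \emph{below} the diagonal of $C_m$ that vanish, so $C_m$ is upper triangular and $L_m=C_m^*$ is lower triangular, as the paper uses later.
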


\begin{proof}
\proofstep{Step 1: identify the residual Gram operator.}
For $b\in E_m$, the projection theorem gives
\[
 \norm{R_mb}^2
 =\inf_{a\in E^{(m-1)}}\norm{S_mb-S^{(m-1)}a}^2.
\]
Expanding the least-squares normal equation with the Moore--Penrose solution
gives \eqref{eq:generalizedinnovation}; hence $D_m=R_m^*R_m$ and
$\ker D_m=\ker R_m$.  Since $R_mb=0$ exactly when
$S_mb=P_{m-1}S_mb\in V_{m-1}$, this also proves
\eqref{eq:innovationkernel}.  Notice that the Moore--Penrose inverse used
here is bounded: closedness of $V_{m-1}=S^{(m-1)}(E^{(m-1)})$
implies closedness of the range of its Gram operator.

\proofstep{Step 2: quotient exactly.}
The map $R_m$ is onto $W_m$.  Indeed, if $w\in W_m\subset V_m$, write
$w=v+S_mb$ with $v\in V_{m-1}$; applying $I-P_{m-1}$ gives $w=R_mb$.
Thus the induced map $\widehat R_m:E_m/\ker R_m\to W_m$ is bijective.
Both spaces are Hilbert spaces and the map is bounded, so the bounded inverse
theorem makes it a Hilbert-space isomorphism.  Because
$\ker R_m=\ker D_m$, this is exactly the asserted local quotient, including
when $D_m$ is singular on the labelled coefficient space.

\proofstep{Step 3: decompose the nested ranges.}
Since $V_m=V_{m-1}\oplus^\perp W_m$, induction gives the displayed
orthogonal sum.  The map
\[
 (b_1,\ldots,b_m)\longmapsto
 \sum_{\ell=1}^m\widehat R_\ell b_\ell
\]
is therefore a bounded isomorphism from the direct sum of the local quotient
spaces onto $V_m$; its Gram operator is
$\mathcal D_m=\operatorname{diag}(D_1,\ldots,D_m)$.

\proofstep{Step 4: identify the two quotient descriptions.}
Both the global quotient and the direct sum of local quotients map bijectively
and boundedly onto the same space $V_m$.  Composing one map with the inverse
of the other gives $C_m$.  This proves the central compatibility statement:
sequential elimination removes all and only the globally null coefficient
directions.  A block
introduced at level $\ell$ has its $W_\ell$ residual as diagonal coordinate
and projections only in $W_k$, $k<\ell$; this proves the triangular statement
and identity diagonal blocks.  Comparing the two Gram operators gives
\[
 (\widehat S^{(m)})^*\widehat S^{(m)}
 =C_m^*\mathcal D_mC_m,
\]
which is the claimed factorization with $L_m=C_m^*$.

\proofstep{Step 5: verify truncation compatibility.}
For $k\le m$, the spaces $V_j,W_j$, projections $P_j$, and residual maps
$R_j$ with $j\le k$ depend only on the first $k$ levels.  The leading
coordinates of $C_m$ therefore agree with $C_k$; no choice of representative
or generalized inverse can change them.
\end{proof}

The lemma supplies the required coordinates.  The local quotient spaces
contain only genuinely new directions, while the triangular factor records
how the original coefficient blocks are reconstructed from those directions.
The stability theorem now follows by placing uniform bounds on these two parts
of the factorization.

\begin{theorem}[Successive-Space Riesz Theorem]
\label{thm:quotientinnovation}
Use the canonical factorization of \Cref{lem:compatibility} and suppose
\begin{equation}
 \widehat G^{(m)}=L_m\mathcal D_mL_m^*,
 \quad \mathcal D_m=\operatorname{diag}(D_1,\ldots,D_m),
 \label{eq:ldl}
\end{equation}
with $L_m=C_m^*$ on the successive residual quotients.
Assume, uniformly in $m$ and $\ell$,
\begin{equation}
 \delta I\preceq D_\ell\preceq\Delta I,\quad
 \norm{L_m}\leq M,\quad \norm{L_m^{-1}}\leq K
 \label{eq:innovationassumptions}
\end{equation}
for constants $0<\delta\leq\Delta<\infty$ and $M,K<\infty$.
Here $D_\ell$ acts on the quotient $E_\ell/\ker D_\ell$, canonically
realized as $(\ker D_\ell)^\perp$, and $I$ is the identity there.
Then
\begin{equation}
 \frac{\delta}{K^2}\norm{[c]}^2
 \leq\norm{S^{(m)}[c]}_{\mathcal H}^2
 \leq\Delta M^2\norm{[c]}^2,\quad m\geq1.
 \label{eq:quotientriesz}
\end{equation}
The coefficient-to-function maps extend consistently to countably many
levels.  On the resulting intrinsic coefficient space the map remains bounded
below by $\sqrt\delta/K$, its image is closed, and every represented function
has a unique minimum-norm intrinsic coefficient.
\end{theorem}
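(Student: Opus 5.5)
The plan is to read the two-sided bound directly off the factorization \eqref{eq:ldl} supplied by \Cref{lem:compatibility}, and then pass to countably many levels using the truncation compatibility in that lemma.

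\textbf{Finite $m$.} For $[c]\in\widehat E^{(m)}$ we have
\[
 \norm{S^{(m)}[c]}^2=\ip{\widehat G^{(m)}[c]}{[c]}
 =\ip{\mathcal D_m L_m^*[c]}{L_m^*[c]} .
\]
Set $y=L_m^*[c]=C_m[c]$, which lies in $\bigoplus_\ell (\ker D_\ell)^\perp$. Since $\mathcal D_m$ is block diagonal, the hypothesis $\delta I\preceq D_\ell\preceq\Delta I$ on each quotient gives
\[
 \delta\norm{y}^2\le\ip{\mathcal D_my}{y}\le\Delta\norm{y}^2 .
\]
For the upper bound, use $\norm{y}\le\norm{L_m^*}\norm{[c]}=\norm{L_m}\norm{[c]}\le M\norm{[c]}$. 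For the lower bound, write $[c]=(L_m^*)^{-1}y$. Then $\norm{[c]}\le\norm{(L_m^{-1})^*}\norm{y}\le K\norm{y}$, so $\norm{y}^2\ge\norm{[c]}^2/K^2$. Together these give \eqref{eq:quotientriesz}. The only care needed is to make sure the quotient norm on $\widehat E^{(m)}$ is the Hilbert norm of $(\ker S^{(m)})^\perp$, so that the adjoint $L_m^*$ and the equality $\norm{L_m^*}=\norm{L_m}$ are legitimate. Both are standard once each quotient is realized as an orthogonal complement, as in the statement.

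\textbf{Countably many levels.} This is the main obstacle. The plan is to avoid working with the quotients $E^{(m)}/\ker S^{(m)}$ directly, because they are not obviously nested. Instead, work in the residual coordinates $\bigoplus_\ell E_\ell/\ker D_\ell$, which truncate compatibly by Step~5 of the lemma.

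1. Define $\widehat E^{(\infty)}$ as the completion of the union of the finite intrinsic spaces, using the inclusions induced by $C_k^{-1}\circ(\text{inclusion of the first }k\text{ residual blocks})\circ C_m$ for $k\ge m$. Truncation compatibility makes these inclusions consistent.

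2. The bounds $\norm{L_m}\le M$ and $\norm{L_m^{-1}}\le K$ make these embeddings uniformly bi-Lipschitz. Hence the finite Riesz bounds hold on a dense subspace with constants independent of $m$.

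3. Extend $S$ by continuity, using the upper bound $\sqrt{\Delta}M$. The lower bound $\sqrt\delta/K$ then passes to the limit.

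4. A bounded-below operator on a Hilbert space has closed range. Closed range plus injectivity on the intrinsic space gives uniqueness of the intrinsic coefficient. Because the quotient norm is realized by the orthogonal complement of the kernel, this unique intrinsic coefficient is also the minimum-norm representative.

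The step most likely to need extra work is showing that the finite-level quotients embed isometrically, or at least compatibly, into one another. If that turns out to be awkward, I would instead define the infinite intrinsic space as $\bigoplus_\ell^{\ell^2}(\ker D_\ell)^\perp$ transported by the limit of the $L_m$. That limit exists strongly because the operators are triangular, truncation-compatible and uniformly bounded. The bounds would then be derived there.
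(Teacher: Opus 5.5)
Your argument is correct and is the paper's. The finite-$m$ bound comes from $\norm{S^{(m)}[c]}^2=\ip{\mathcal D_mL_m^*[c]}{L_m^*[c]}$ together with $K^{-1}\norm{[c]}\le\norm{L_m^*[c]}\le M\norm{[c]}$, and the countable-level claim follows from density, passage of the lower bound to the limit, and closed range of a bounded-below map.

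Your extra care at infinitely many levels is justified, and the fix is short. The natural embeddings $[c]\mapsto[(c,0)]$ coincide with your $C_k^{-1}\iota C_m$; they are norm-nonincreasing rather than isometric, because the kernel only grows with $k$. So equip the union with the decreasing limit $\lim_k\norm{[(c,0)]}_{\widehat E^{(k)}}$. Equivalently, quotient $\bigoplus^{\ell^2}E_\ell$ by $\ker S^{(\infty)}$, on which $S^{(\infty)}$ is bounded by $\sqrt\Delta M$. With either norm, the finite bounds pass to the limit with the exact constants $\delta/K^2$ and $\Delta M^2$.
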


\begin{proof}
\proofstep{Step 1: remove null directions.}
The induced synthesis map on $(\ker S^{(m)})^\perp$, the canonical realization
of $\widehat E^{(m)}$, is injective and has Gram operator
$\widehat G^{(m)}$.

\proofstep{Step 2: pass to orthogonal residual coordinates.}
By \eqref{eq:ldl},
\[
 \norm{S^{(m)}c}^2
 =\ip{\mathcal D_mL_m^*c}{L_m^*c}.
\]
Consequently
$\delta\norm{L_m^*c}^2\leq\norm{S^{(m)}c}^2
\leq\Delta\norm{L_m^*c}^2$.

\proofstep{Step 3: control accumulated elimination.}
The bounds on $L_m$ and $L_m^{-1}$ give
$K^{-1}\norm c\leq\norm{L_m^*c}\leq M\norm c$, proving
\eqref{eq:quotientriesz}.

\proofstep{Step 4: pass to infinitely many blocks.}
The constants are independent of $m$.  Synthesis therefore extends by density
from finitely supported quotient sequences, and the lower bound passes to the
limit.  A bounded-below operator has closed range; quotienting gives the
unique minimum-norm representative.
\end{proof}

\paragraph{Discussion.}
The theorem permits large pairwise cross-block correlations because it tests a
new block against the \emph{whole} preceding space rather than summing raw
pairwise interactions.  Exact copies disappear in the quotient; near copies
appear as small eigenvalues of $D_\ell$; and harmful accumulation across many
otherwise acceptable levels appears through $L_m$.  These are mathematically
distinct failure modes, although a test on the full labelled Gram matrix
conflates them.

In frame-theoretic terms, this is a Riesz estimate on the intrinsic
coefficient space.  Its additional content is that the estimate is assembled
successively from quotient-compatible Schur innovations
\cite{BalazsEtAl2020,Christensen2016,TarcsayTitkos2020}.

A positive innovation bound establishes stable novelty, not utility for a
particular target.  Moreover, without locality, decay, recurrence, or another
structural restriction, controlling $L_m$ may be as difficult as conditioning
the complete Gram operator.  The following corollaries give the approximation
consequences under the stated hypotheses; \Cref{sec:checkable} gives structural
conditions under which those hypotheses are local.

\begin{corollary}[Approximation, truncation, and error localization]
\label{cor:approximation}
Under \Cref{thm:quotientinnovation}, set $A=\delta/K^2$,
$B=\Delta M^2$, $V_m=S^{(m)}(E^{(m)})$, and
$V=\overline{\bigcup_mV_m}$.  If $P_m$ and $P_V$ are the corresponding
orthogonal projections, then for every $f\in\mathcal H$:
\begin{enumerate}[label=(\roman*),leftmargin=2em]
\item $P_mf\to P_Vf$, and the minimum-norm quotient coefficient $c_m$ of
      $P_mf$ satisfies $\norm{c_m}\leq A^{-1/2}\norm{P_mf}$;
\item if $c$ synthesizes $P_Vf$ and $Q_m$ truncates its quotient coordinates,
      then
      $\norm{P_Vf-SQ_mc}\leq\sqrt B\norm{(I-Q_m)c}$;
\item with $P_0=0$,
\[
 \norm{f-P_mf}^2=\norm{f-P_Vf}^2
 +\sum_{\ell>m}\norm{P_\ell f-P_{\ell-1}f}^2.
\]
\end{enumerate}
\end{corollary}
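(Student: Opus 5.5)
The plan is to read all three items off the orthogonal decomposition $V_m=W_1\oplus^\perp\cdots\oplus^\perp W_m$ from \Cref{lem:compatibility}, together with the quotient Riesz bounds \eqref{eq:quotientriesz} of \Cref{thm:quotientinnovation}.

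Everything rests on one observation. The spaces $W_\ell$ are mutually orthogonal, and $V=\overline{\bigcup_m V_m}=\bigoplus_{\ell\ge1}^\perp W_\ell$. Hence $P_Vf=\sum_\ell (P_\ell-P_{\ell-1})f$ with orthogonal summands, and $P_\ell-P_{\ell-1}$ is the projection onto $W_\ell$.

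For (i), note that $P_m$ is an increasing sequence of projections whose ranges have dense union in $V$. The standard argument then gives convergence. Given $\varepsilon>0$, choose $v\in V_{m_0}$ with $\norm{P_Vf-v}<\varepsilon$. For $m\ge m_0$ we have
\[
\norm{P_Vf-P_mf}=\norm{P_Vf-P_mP_Vf}\le\norm{P_Vf-v}.
\]
Here we use $P_mP_V=P_m$, which holds since $V_m\subseteq V$. The coefficient bound is the lower inequality in \eqref{eq:quotientriesz} applied to the minimum-norm representative $c_m$ of $P_mf\in V_m$. Closedness of $V_m$ is part of the standing hypotheses, and the unique minimum-norm intrinsic coefficient exists by the last part of the theorem. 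This gives $A\norm{c_m}^2\le\norm{P_mf}^2$.

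For (ii), use linearity of the extended synthesis map $S$ on the countable-level intrinsic space from Step~4 of the theorem's proof. This gives $P_Vf-SQ_mc=S(I-Q_m)c$. The upper bound in \eqref{eq:quotientriesz} extends to the limit space with the same constant $B$, by density. One point needs care: $(I-Q_m)c$ must be an admissible intrinsic coefficient, so that the norm used in the bound is the quotient norm. I would interpret truncation in the successive-residual coordinates of $C_m$, using the truncation compatibility of \Cref{lem:compatibility}. Then $Q_m$ is a well-defined bounded projection on the intrinsic space, and the upper Riesz bound applies directly.

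For (iii), write
\[
f-P_mf=(f-P_Vf)+(P_Vf-P_mf).
\]
The first term is orthogonal to $V$ and the second lies in $V$, so Pythagoras gives
\[
\norm{f-P_mf}^2=\norm{f-P_Vf}^2+\norm{P_Vf-P_mf}^2 .
\]
By (i) and telescoping, $P_Vf-P_mf=\sum_{\ell>m}(P_\ell-P_{\ell-1})f$, and this series converges in norm. Its terms lie in the mutually orthogonal spaces $W_\ell$, so Parseval's identity gives the sum of squares.

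I expect the main obstacle to be (ii). There one has to make precise what ``truncating quotient coordinates'' means in the infinite-level intrinsic space, and check that the upper bound $B$ survives the extension by density. Items (i) and (iii) are routine Hilbert-space facts once the orthogonal decomposition is in hand.
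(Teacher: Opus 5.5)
Your proposal is correct and follows essentially the paper's route. You use strong convergence of the nested projections, then the lower and upper quotient Riesz bounds applied to $c_m$ and to $(I-Q_m)c$, and finally Pythagoras with the mutually orthogonal increments $P_\ell f-P_{\ell-1}f\in W_\ell$ and a passage to the limit. Your attention to what truncation means in (ii) is more than the paper gives. Note, however, that the inequality only uses linearity and the upper bound, so it holds for any well-defined linear truncation. That includes truncating a labelled representative, since the quotient norm is at most the labelled norm; the choice of successive-residual coordinates is therefore not essential.
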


\begin{proof}
Nested closed subspaces have strongly convergent orthogonal projections.  The
coefficient estimate and truncation estimate follow from the lower and upper
Riesz bounds.  The increments
$P_\ell f-P_{\ell-1}f\in V_\ell\ominus V_{\ell-1}$ are mutually orthogonal,
so the last identity is Pythagoras followed by passage to the limit.
\end{proof}

The corollary gives the approximation consequences.  Uniform Riesz bounds do
more than stabilize coefficients: they ensure convergence of the nested best
approximations, bound the error caused by discarding later coefficient blocks,
and decompose the remaining projection error into orthogonal levelwise
increments.  The identity is exact in the Hilbert norm; application-specific
estimators are still needed when that norm is not directly observable.

\begin{corollary}[Perturbation stability]
\label{cor:perturbation}
If $S$ has quotient-Riesz bounds $A,B$ on a fixed reduced coefficient space
and $\norm{\widetilde S-S}\leq\varepsilon<\sqrt A$, then
\[
 (\sqrt A-\varepsilon)^2\norm c^2
 \leq\norm{\widetilde Sc}^2
 \leq(\sqrt B+\varepsilon)^2\norm c^2.
\]
\end{corollary}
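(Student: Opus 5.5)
The plan is the standard triangle-inequality perturbation argument for operators bounded below. We interpret the hypothesis as follows: on the fixed reduced coefficient space (realized as a closed subspace $\widehat E$ of the labelled coefficient space, e.g.\ $(\ker S)^\perp$), the map $S$ satisfies
\[
 A\norm{c}^2\leq\norm{Sc}^2\leq B\norm{c}^2,\qquad c\in\widehat E .
\]
The perturbation $\widetilde S$ acts on the same space $\widehat E$, with $\norm{\widetilde S-S}\leq\varepsilon$ measured as an operator norm there.

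\textbf{Upper bound.} For $c\in\widehat E$, write $\widetilde Sc=Sc+(\widetilde S-S)c$. The triangle inequality gives
\[
 \norm{\widetilde Sc}\leq\norm{Sc}+\norm{(\widetilde S-S)c}\leq(\sqrt B+\varepsilon)\norm c .
\]
Squaring this yields the right-hand inequality.

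\textbf{Lower bound.} The reverse triangle inequality gives
\[
 \norm{\widetilde Sc}\geq\norm{Sc}-\norm{(\widetilde S-S)c}\geq(\sqrt A-\varepsilon)\norm c .
\]
Since $\varepsilon<\sqrt A$, the right-hand side is nonnegative. Squaring the inequality between nonnegative quantities therefore preserves it, which gives the left-hand inequality.

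The only genuine subtlety is that the estimate lives on the \emph{fixed} reduced space. We do not re-quotient by $\ker\widetilde S$. The lower bound shows that $\widetilde S$ is injective on $\widehat E$, so no new null directions appear there and no additional quotient is needed. As a consequence, $\widetilde S$ is bounded below on $\widehat E$, so its range is closed. This parallels Step~4 of \Cref{thm:quotientinnovation}: the minimum-norm-representative conclusions survive the perturbation.

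I expect no real obstacle. The one point to state explicitly is that $\norm{\widetilde S-S}$ is the norm of the restriction to $\widehat E$; the norm on the full labelled space would only be larger, so it also suffices. With $A=\delta/K^2$ and $B=\Delta M^2$ from \Cref{thm:quotientinnovation}, the result applies uniformly in $m$.
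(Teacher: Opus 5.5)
Your proposal is correct and is essentially the paper's own proof: the triangle and reverse-triangle inequalities applied to $\widetilde Sc=Sc+(\widetilde S-S)c$, with $\varepsilon<\sqrt A$ keeping $(\sqrt A-\varepsilon)\norm c$ nonnegative so the lower bound can be squared. One caveat on your closing aside, which is not part of the statement: the minimum-norm conclusion holds only within the fixed space $\widehat E$, because $\ker\widetilde S$ on the full labelled space need not equal $\ker S$.
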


\begin{proof}
Apply the triangle and reverse-triangle inequalities to
$\widetilde Sc=Sc+(\widetilde S-S)c$.
\end{proof}

Perturbation stability supplies a margin interpretation of the lower Riesz
constant: changes smaller than $\sqrt A$ cannot destroy injectivity on the
reduced coefficient space.  It is deliberately a representation statement,
not a convergence theorem for a nonlinear optimizer.

\subsection{When the hypotheses can be checked locally}
\label{sec:checkable}

The factor $L_m$ should not be estimated by diagonalizing the full Gram
matrix.  Write $L_m=I+N_m$ and let
$n_{\ell k}=\norm{(N_m)_{\ell k}}$ for $k<\ell$.  These blocks are sequential
regression coefficients: they describe how a newly introduced generating
family projects onto earlier
orthogonal residual spaces after all intervening redundancy has been eliminated.

\begin{proposition}[Summable regression criterion]
\label{prop:regressioncriterion}
Suppose, uniformly in $m$,
\[
 R_N=\sup_\ell\sum_{k<\ell}n_{\ell k}<\infty,
 \quad
 C_N=\sup_k\sum_{\ell>k}n_{\ell k}<\infty,
 \quad q=\sqrt{R_NC_N}<1.
\]
Then
\[
 \norm{N_m}\leq q,\quad
 \norm{L_m}\leq1+q,\quad
 \norm{L_m^{-1}}\leq\frac1{1-q}.
\]
Thus \Cref{thm:quotientinnovation} is activated by blockwise regression
estimates without requiring small raw cross-Gram row sums.
\end{proposition}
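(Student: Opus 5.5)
The plan is to view $N_m$ as a strictly block-lower-triangular operator on $\bigoplus_{\ell\le m}E_\ell/\ker D_\ell$, realized as $\bigoplus_\ell(\ker D_\ell)^\perp$. Its blocks satisfy $\norm{(N_m)_{\ell k}}=n_{\ell k}$ for $k<\ell$ and vanish otherwise. The first bound, $\norm{N_m}\le q$, will come from the block version of the Schur test: the operator norm is dominated by the $\ell^2$ operator norm of the scalar matrix $(n_{\ell k})$, and that matrix's norm is at most $\sqrt{R_NC_N}$ by the Schur test.

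\textbf{Step 1: bound $\norm{N_m}$ by the Schur test.} For $c=(c_1,\dots,c_m)$ we have
\[
 \norm{(N_mc)_\ell}\le\sum_{k<\ell}n_{\ell k}\norm{c_k}.
\]
Cauchy--Schwarz with the splitting $n_{\ell k}=n_{\ell k}^{1/2}\cdot n_{\ell k}^{1/2}$ then gives
\[
 \norm{(N_mc)_\ell}^2\le\Bigl(\sum_{k<\ell}n_{\ell k}\Bigr)\Bigl(\sum_{k<\ell}n_{\ell k}\norm{c_k}^2\Bigr)\le R_N\sum_{k<\ell}n_{\ell k}\norm{c_k}^2 .
\]
Summing over $\ell$ and exchanging the order of summation yields
\[
 \norm{N_mc}^2\le R_N\sum_k\Bigl(\sum_{\ell>k}n_{\ell k}\Bigr)\norm{c_k}^2\le R_NC_N\norm c^2 .
\]
Hence $\norm{N_m}\le q$. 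The constants $R_N$ and $C_N$ are suprema over all levels, and truncation compatibility in \Cref{lem:compatibility} ensures that the blocks of $N_m$ do not depend on $m$. The bound is therefore uniform in $m$.

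\textbf{Step 2: bound $L_m$ and $L_m^{-1}$.} The triangle inequality gives $\norm{L_m}\le1+\norm{N_m}\le1+q$. Since $q<1$, the Neumann series
\[
 L_m^{-1}=\sum_{j\ge0}(-N_m)^j
\]
converges in operator norm, and $\norm{L_m^{-1}}\le\sum_j q^j=1/(1-q)$. Strict triangularity in fact makes $N_m$ nilpotent for finite $m$, so the series is a finite sum. Even so, the geometric bound is what makes the estimate uniform in $m$. This inverse agrees with the inverse of $L_m=C_m^*$ guaranteed by \Cref{lem:compatibility}, by uniqueness of inverses.

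\textbf{Step 3: apply the theorem and identify the main obstacle.} Setting $M=1+q$ and $K=1/(1-q)$ in \eqref{eq:innovationassumptions} activates \Cref{thm:quotientinnovation}. The main care point is the bookkeeping in Step 1. The block norms must be measured between the quotient spaces with the quotient norms, that is, on $(\ker D_k)^\perp\to(\ker D_\ell)^\perp$, so that $\norm{(N_mc)_\ell}\le\sum_k n_{\ell k}\norm{c_k}$ holds with the same norm that the theorem uses. Also, the identity diagonal of $L_m$ from the lemma must be what justifies writing $L_m=I+N_m$ with $N_m$ strictly lower triangular. Apart from this, the argument is the standard Schur test and Neumann series.
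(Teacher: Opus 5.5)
Your proposal is correct and follows the paper's proof: the paper also uses the block Schur test to get $\norm{N_m}^2\le R_NC_N=q^2$, which you prove directly via Cauchy--Schwarz. It then uses the triangle inequality for $\norm{L_m}\le 1+q$ and the Neumann series $L_m^{-1}=\sum_{j\ge0}(-N_m)^j$ for $\norm{L_m^{-1}}\le(1-q)^{-1}$, as you do.
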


\begin{proof}
The block Schur test gives
$\norm{N_m}^2\leq R_NC_N=q^2$.  Since $q<1$, the Neumann series
$L_m^{-1}=\sum_{j\geq0}(-N_m)^j$ converges in operator norm and has norm at
most $(1-q)^{-1}$; the upper bound for $L_m$ is the triangle inequality.
\end{proof}

\begin{proposition}[Two further structural routes to uniform elimination]
\label{prop:structuralelimination}
Besides the small-regression condition of
\Cref{prop:regressioncriterion}, either of the following hypotheses gives
bounds on \(L_m\) and \(L_m^{-1}\) without spectral analysis of the complete
Gram operator.
\begin{enumerate}[label=(\roman*),leftmargin=2em]
\item \emph{Factorized local updates.} Suppose
\[
 L_m=\prod_{j=1}^{J_m}(I+K_{m,j}),\qquad \norm{K_{m,j}}<1,
\]
with the order fixed by the elimination, and suppose uniformly in \(m\)
\[
 \sum_{j=1}^{J_m}\norm{K_{m,j}}\leq C_+,\qquad
 \sum_{j=1}^{J_m}-\log(1-\norm{K_{m,j}})\leq C_-.
\]
Then
\[
 \norm{L_m}\leq e^{C_+},\qquad
 \norm{L_m^{-1}}\leq e^{C_-}.
\]
\item \emph{Causal operator symbol.} Suppose all quotient blocks are copies
of one finite-dimensional space \(E\), and the infinite elimination operator
on \(\ell^2(\mathbb N;E)\) is the causal convolution
\[
 (Lc)_\ell=\sum_{j=0}^{\ell}A_jc_{\ell-j},\qquad A_0=I,
 \quad \alpha:=\sum_{j\geq0}\norm{A_j}<\infty.
\]
If the analytic matrix symbol
\(\mathcal L(z)=\sum_{j\geq0}A_jz^j\) is invertible for
\(\lvert z\rvert\leq1\), and
\(\mathcal L(z)^{-1}=\sum_{j\geq0}B_jz^j\) satisfies
\(\beta:=\sum_{j\geq0}\norm{B_j}<\infty\), then every causal finite section
satisfies
\[
 \norm{L_m}\leq\alpha,\qquad \norm{L_m^{-1}}\leq\beta.
\]
For a matrix polynomial symbol, nonvanishing of
\(\det\mathcal L(z)\) on the closed unit disk implies the required absolute
summability of the inverse coefficients.
\end{enumerate}
\end{proposition}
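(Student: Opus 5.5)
Part (i) is a submultiplicativity and Neumann-type argument applied factor by factor. For the upper bound I will use $\norm{L_m}\leq\prod_j\norm{I+K_{m,j}}\leq\prod_j(1+\norm{K_{m,j}})$. Together with $1+x\leq e^x$ this gives $\norm{L_m}\leq\exp\bigl(\sum_j\norm{K_{m,j}}\bigr)\leq e^{C_+}$.

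For the inverse, each factor $I+K_{m,j}$ is invertible because $\norm{K_{m,j}}<1$. The Neumann series gives $\norm{(I+K_{m,j})^{-1}}\leq(1-\norm{K_{m,j}})^{-1}=\exp\bigl(-\log(1-\norm{K_{m,j}})\bigr)$. Since $L_m^{-1}$ is the product of the inverted factors in reverse order, submultiplicativity yields $\norm{L_m^{-1}}\leq e^{C_-}$. The fixed order matters only in that the factors do not commute; the norm bounds are order-independent. Nothing here is delicate.

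For (ii), I identify the finite section $L_m$ with the block lower-triangular Toeplitz matrix $T_m(\mathcal L)$ acting on $E^m$, with entries $(T_m)_{\ell k}=A_{\ell-k}$ for $k\leq\ell$. It is the compression $P_mLP_m$ of the causal convolution $L$ to the first $m$ coordinates. Causality means $L$ maps sequences supported in $\{0,\ldots,m-1\}$ into sequences whose first $m$ entries depend only on those inputs, so $T_m(\mathcal L)=P_mLP_m$. Hence $\norm{L_m}\leq\norm{L}$, and Young's inequality for $\ell^1*\ell^2$ convolution on $\ell^2(\mathbb N;E)$ gives $\norm{L}\leq\alpha$.

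For the inverse, the key algebraic fact is that lower-triangular Toeplitz sections form a homomorphic image of truncated power series: $T_m(\mathcal F)T_m(\mathcal G)=T_m(\mathcal F\mathcal G)$, because products of coefficients with indices summing to at most $m-1$ are all retained. Since $\mathcal L(z)\mathcal L(z)^{-1}=I$ as power series (the inverse being analytic near the closed disk, with the Taylor coefficients $B_j$), we get $T_m(\mathcal L)T_m(\mathcal L^{-1})=T_m(I)=I$. Thus $L_m^{-1}=T_m(\mathcal L^{-1})$, which is again the compression of the causal convolution with $(B_j)$, so $\norm{L_m^{-1}}\leq\beta$ by the same Young bound.

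I expect the main obstacle to be the final sentence of (ii): absolute summability of the $B_j$ for a matrix polynomial symbol. I would write $\mathcal L(z)^{-1}=\operatorname{adj}\mathcal L(z)/\det\mathcal L(z)$. The adjugate is a matrix polynomial, and $\det\mathcal L$ is a scalar polynomial with no zeros in $\lvert z\rvert\leq1$. By compactness its zeros lie in $\lvert z\rvert\geq r>1$, so $1/\det\mathcal L$ is analytic on a disk of radius $r$. By Cauchy estimates its Taylor coefficients then decay like $\rho^{-j}$ for some $1<\rho<r$. Multiplying by the polynomial adjugate preserves geometric decay, so $\beta<\infty$.

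One point deserves care in the writeup: the hypothesis in (ii) concerns a single fixed infinite operator whose finite sections are the $L_m$. This is exactly what makes the truncation compatibility of \Cref{lem:compatibility} relevant, since it guarantees that the $L_m$ are indeed nested sections of one operator.
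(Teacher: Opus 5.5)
Your proposal is correct and follows essentially the same route as the paper. In (i) you use submultiplicativity with $1+x\le e^x$ and factorwise Neumann bounds on the reversed product; in (ii) you use Young's inequality plus compression, and your identity $T_m(\mathcal F)T_m(\mathcal G)=T_m(\mathcal F\mathcal G)$ just spells out what the paper compresses into ``causality makes the inverse of a leading finite section the corresponding section of the causal inverse.'' Your adjugate/determinant argument with Cauchy estimates on a radius $1<\rho<r$ is the paper's analyticity-plus-Cauchy step, stated slightly more carefully.
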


The proof uses product bounds and Young's convolution inequality and is given
in \Cref{app:structuralproofs}.

Two useful specializations are immediate.  If
$n_{\ell k}\leq C_0a_{\ell-k}$ for a nonnegative summable sequence and
$C_0\sum_{j\geq1}a_j<1$, then the proposition applies.  This includes banded
regressions and Jaffard-type algebraic decay
$a_j=(1+j)^{-s}$, $s>1$.  These are localization conditions on
\emph{conditional regressions}, not diagonal dominance of the original Gram
operator.

Principal angles control the other factor.  If
$G_{\ell\ell}\succeq aI$ and every vector in
$S_\ell(E_\ell)$ makes angle at least $\theta_0>0$ with
$V_{\ell-1}$ after quotienting their intersection, then
\begin{equation}
 D_\ell\succeq a\sin^2(\theta_0)I.
 \label{eq:angleinnovation}
\end{equation}
Indeed, \eqref{eq:innovationdistance} is the squared distance to
$V_{\ell-1}$.  Combining \eqref{eq:angleinnovation} with
\Cref{prop:regressioncriterion} makes every constant operational.

The causal-symbol route in \Cref{prop:structuralelimination} is stronger than
testing each finite section separately: one fixed analytic symbol controls all
truncation levels. The same bounds survive an asymptotically stationary
operator perturbation \(E\) when \(\beta\norm E<1\), with inverse bound
\(\beta/(1-\beta\norm E)\).

\paragraph{When these assumptions are genuinely easier.}
The elimination hypothesis is not automatically weaker than conditioning the
full Gram operator. Once
\(\delta I\preceq\mathcal D_m\preceq\Delta I\), the identity
\(\widehat G^{(m)}=L_m\mathcal D_mL_m^*\) shows that uniform Gram conditioning
and uniform two-sided control of \(L_m\) are quantitatively equivalent up to
\(\delta,\Delta\). No local criterion can therefore simplify an arbitrary
dense, unstructured family.

The advantage appears when the generating mechanism supplies structure before
the full Gram matrix is formed. Localized conditional regressions use row and
column sums independent of \(m\); lifting or multilevel constructions expose
products of local update operators; stationary recurrences reduce all finite
sections to one fixed matrix symbol. In these cases the verification data have
bounded local size or form summable sequences, whereas the quotient Gram
dimension grows with \(m\) and its smallest eigenvalue would otherwise need to
be recomputed after every enlargement.

\begin{table}[H]
\centering
\small
\caption{Structural routes for verifying the elimination hypothesis. The
last row states the boundary of the method rather than a positive criterion.}
\label{tab:eliminationroutes}
\begin{tabular}{@{}p{0.22\textwidth}p{0.31\textwidth}p{0.37\textwidth}@{}}
\toprule
Available structure & Quantity checked & Why it avoids the growing Gram test\\
\midrule
Localized nonstationary blocks & Uniform row/column sums of conditional
regression norms & Only a fixed or summably decaying level neighborhood is
required\\
Lifting or local multilevel updates & Summable norms of elementary factors
and their inverse margins & Bounds compose from local updates already exposed
by the construction\\
Stationary or asymptotically stationary recurrence & One causal matrix symbol
and its inverse coefficients & A fixed finite-dimensional symbol controls all
leading truncations\\
Dense unstructured interactions & No smaller structural datum is available &
Uniform \(L_m\) control is essentially the original Gram-conditioning problem\\
\bottomrule
\end{tabular}
\end{table}

\section{Numerical approximation}
\label{sec:algorithm}

We now derive an enrichment method from the stability theory.  A well
conditioned candidate need not improve the current approximation.  The
residual Gram operator therefore measures stable innovation, while its action
on the target residual measures approximation gain.  Together they define an
incremental procedure for possibly redundant block families.

This connects the construction with greedy and adaptive approximation:
target residuals determine usefulness, but each candidate is first
reduced to its stable intrinsic innovation
\cite{CohenDahmenDeVore2003,DeVorePetrovaWojtaszczyk2013,Temlyakov2008}.

Let $F_q$ be a finite-dimensional candidate coefficient space
and let $T_q:F_q\to\mathcal H$ be bounded.  At the
current closed approximation space $V$ with orthogonal projector $P$, define
\begin{equation}
 R_q=(I-P)T_q,\qquad D_q=R_q^*R_q,
 \qquad b_q=R_q^*(f-Pf).
 \label{eq:algorithmaudit}
\end{equation}
The first two objects depend only on the candidate and the current space; the
covector $b_q$ also depends on the target $f$.  For a numerical innovation
threshold $\tau\geq0$, let $\Pi_{q,\tau}$ be the spectral projector of $D_q$
on $[\tau,\infty)$, with zero eigenvalues omitted when $\tau=0$, and let
$D_{q,\tau}^{\dagger}$ denote the inverse on this retained quotient range and
zero on its orthogonal complement.  Put
\begin{equation}
 \Gamma_q
 =\ip{b_q}{D_{q,\tau}^{\dagger}b_q},\qquad
 W_q=R_q\Pi_{q,\tau}(F_q).
 \label{eq:algorithmgain}
\end{equation}
Thus $\dim W_q$ is the intrinsic cost of the candidate block, not its number
of redundant labels.  A normalized permutation or splitting of generators
does not alter $W_q$ or $\Gamma_q$; more generally these objects are intrinsic
under an isometric change of quotient coordinates.

\begin{proposition}[Exact Schur--Riesz block-enrichment theorem]
\label{prop:innovationgain}
Let $P_{V+W_q}$ be the orthogonal projector onto $V+W_q$.  Then
\begin{equation}
 \norm{f-Pf}^2-\norm{f-P_{V+W_q}f}^2=\Gamma_q.
 \label{eq:exactgain}
\end{equation}
Moreover, every retained quotient direction has innovation energy at least
$\tau$, and its minimum-norm update satisfies
\begin{equation}
 \norm{D_{q,\tau}^{\dagger}b_q}
 \leq \tau^{-1}\norm{\Pi_{q,\tau}b_q}
 \quad(\tau>0).
 \label{eq:updatebound}
\end{equation}
\end{proposition}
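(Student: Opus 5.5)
The plan is to use orthogonality to reduce the statement to a least-squares problem in the finite-dimensional space $F_q$. Two facts drive it. First, $W_q=R_q\Pi_{q,\tau}(F_q)\subseteq\operatorname{ran}(I-P)$, so $W_q\perp V$. Second, $W_q$ is finite-dimensional, so $V+W_q$ is closed and
\[
 P_{V+W_q}=P+P_{W_q}.
\]
Writing $r=f-Pf\perp V$, the Pythagorean theorem gives
\[
 \norm{r}^2-\norm{f-P_{V+W_q}f}^2=\norm{P_{W_q}r}^2 .
\]
So \eqref{eq:exactgain} reduces to the identity $\norm{P_{W_q}r}^2=\Gamma_q$.

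\emph{Computing $\norm{P_{W_q}r}^2$.} Set $\widetilde R=R_q\Pi_{q,\tau}$, so that $W_q=\operatorname{ran}\widetilde R$.
\begin{itemize}[leftmargin=2em]
\item On $\operatorname{ran}\Pi_{q,\tau}$, the Gram operator is $\widetilde R^*\widetilde R=\Pi_{q,\tau}D_q\Pi_{q,\tau}$. Its eigenvalues are all $\geq\tau$ when $\tau>0$, and are positive when $\tau=0$ because zero eigenvalues are omitted. Hence it is invertible there, and its inverse is exactly $D_{q,\tau}^\dagger$ (extended by zero off the retained range).
\item The standard projection formula $P_{W_q}=\widetilde R(\widetilde R^*\widetilde R)^{\dagger}\widetilde R^*$ then gives
\[
 \norm{P_{W_q}r}^2=\ip{\widetilde R^*r}{(\widetilde R^*\widetilde R)^\dagger\widetilde R^*r}.
\]
\item We have $\widetilde R^*r=\Pi_{q,\tau}R_q^*r=\Pi_{q,\tau}b_q$. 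Since $\Pi_{q,\tau}$ commutes with $D_q$, and $D_{q,\tau}^\dagger$ already kills the complement of the retained range, the right-hand side equals $\ip{b_q}{D_{q,\tau}^\dagger b_q}=\Gamma_q$.
\end{itemize}
Equivalently, one can minimize $\norm{r-R_qx}^2$ over $x\in\operatorname{ran}\Pi_{q,\tau}$. The normal equations are $\Pi D_q\Pi x=\Pi b_q$, with minimum-norm solution $x^\star=D_{q,\tau}^\dagger b_q$, and the minimal value is $\norm r^2-\ip{b_q}{x^\star}$.

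\emph{Innovation energy of retained directions.} For $x$ in the retained spectral subspace, spectral calculus gives
\[
 \norm{R_qx}^2=\ip{D_qx}{x}\geq\tau\norm x^2,
\]
and by \eqref{eq:innovationdistance} this is the squared distance of $T_qx$ from $V$. This proves the claim that every retained direction has innovation energy at least $\tau$.

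\emph{The update bound.} The minimum-norm update is $x^\star=D_{q,\tau}^\dagger b_q=D_{q,\tau}^\dagger\Pi_{q,\tau}b_q$. On the retained range $D_{q,\tau}^\dagger$ has operator norm at most $\tau^{-1}$, which yields \eqref{eq:updatebound}.

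The only step needing care is the identification of $P_{W_q}$ through the truncated pseudoinverse. It requires checking three things: that the spectral projector commutes with $D_q$; that $\widetilde R$ is injective on $\operatorname{ran}\Pi_{q,\tau}$, so the Gram operator is invertible there; and that replacing $b_q$ by $\Pi_{q,\tau}b_q$ does not change $\Gamma_q$. All three follow from finite-dimensionality of $F_q$ and the spectral theorem. No closed-range issue arises for $V+W_q$, since $W_q$ is finite-dimensional.
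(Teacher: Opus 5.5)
Your proposal is correct and follows essentially the paper's argument: $W_q\perp V$ and Pythagoras reduce the gain to $\norm{P_{W_q}r}^2$, which you compute in retained quotient coordinates as $\ip{b_q}{D_{q,\tau}^{\dagger}b_q}=\Gamma_q$, and the spectral definition of $\Pi_{q,\tau}$ gives both the innovation lower bound and the $\tau^{-1}$ update bound. Your additional checks are ones the paper's short proof leaves implicit: that $\Pi_{q,\tau}D_q\Pi_{q,\tau}$ is invertible on the retained range with inverse $D_{q,\tau}^{\dagger}$, and that $V+W_q$ is closed.
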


\begin{proof}
Because $W_q\perp V$, the new component of the best approximation is the
projection of $r=f-Pf$ onto $W_q$.  In retained quotient coordinates that
projection is $R_qD_{q,\tau}^{\dagger}b_q$.  Its squared norm is
\[
 \ip{D_{q,\tau}^{\dagger}b_q}
     {D_qD_{q,\tau}^{\dagger}b_q}
 =\ip{b_q}{D_{q,\tau}^{\dagger}b_q}=\Gamma_q.
\]
Pythagoras gives \eqref{eq:exactgain}.  The spectral definition of
$\Pi_{q,\tau}$ gives the lower innovation bound, while the norm of the inverse
on that range is at most $\tau^{-1}$, proving \eqref{eq:updatebound}.
\end{proof}

The preceding construction can be expressed as a general numerical
approximation algorithm.  The pseudocode is given in \Cref{alg:ssr}.  Starting
from a closed trial space, it tests each proposed block only through its
component orthogonal to the current space, removes null and poorly conditioned
directions, and computes the resulting reduction in best-approximation error.
It then admits the highest-scoring block that preserves the prescribed Riesz
lower bound.

In the algorithm, $C(V)$ measures resource cost, $\varepsilon$ is the error
tolerance, $\eta$ is the minimum accepted gain, and $B$ is the resource budget.
For candidate $q$, let $A_q^{\mathrm{new}}$ denote the quotient--Riesz lower
bound after adjoining $W_q$; the prescribed stability threshold is
$A_{\min}>0$.  A typical score is
$s(q,W_q,\Gamma_q)=\Gamma_q/\dim W_q$.

\noindent
\begin{algorithm2e}[H]
\small
\caption{Successive Schur--Riesz approximation}
\label{alg:ssr}
\KwIn{$f$, initial space $V$, candidates $\mathcal Q(V)$, threshold
$\tau\geq0$, tolerances $(\varepsilon,\eta,A_{\min})$, budget $B$, cost $C$,
and score $s$.}
\KwOut{$V$ and its compatible quotient factorization $(L,D)$.}
$P\leftarrow P_V$; $r\leftarrow f-Pf$\;
\While{$\norm r>\varepsilon$ and $C(V)<B$}{
  $\mathcal A\leftarrow\varnothing$\;
  \ForEach{$q\in\mathcal Q(V)$}{
    $(D_q,b_q)\leftarrow$ \eqref{eq:algorithmaudit}
    \algcomment{conditional Gram data}\;
    $W_q\leftarrow R_q\Pi_{q,\tau}(F_q)$
    \algcomment{stable innovation}\;
    $\Gamma_q\leftarrow\ip{b_q}{D_{q,\tau}^{\dagger}b_q}$
    \algcomment{exact gain}\;
    \If{$W_q\neq\{0\}$, $\Gamma_q\geq\eta$, and
    $A_q^{\mathrm{new}}\geq A_{\min}$}{
      $\mathcal A\leftarrow\mathcal A\cup\{q\}$\;
    }
  }
  \eIf{$\mathcal A=\varnothing$}{
    \textbf{break}\;
  }{
    $q^*\leftarrow\arg\max_{q\in\mathcal A}s(q,W_q,\Gamma_q)$\;
    $V\leftarrow V+W_{q^*}$
    \algcomment{enrichment}\;
    Update $(P,r,L,D)$ and the bounds in
    \Cref{thm:quotientinnovation}\;
  }
}
\Return{$(V,L,D)$}\;
\end{algorithm2e}

Thus each iteration has three stages: compute the current residual, test the
conditional innovations supplied by the candidate blocks, and enlarge the
space by one stable block with sufficient exact gain.  The procedure terminates
when the residual tolerance is reached, the resource budget is exhausted, or
no candidate satisfies both the gain and stability thresholds.

Two realizations are natural.  Storing an orthonormal basis of the spaces
$W_q$ gives canonical numerical coordinates and unit local Gram blocks.
Retaining the original structured generators preserves locality,
compositional form, or physical interpretation; the quotient $LDL^*$ bounds
then control their coefficient stability.  In either realization,
\eqref{eq:exactgain} attributes the improvement to the accepted block exactly,
without a Taylor approximation.

Projection, pseudoinversion, Schur elimination, and greedy selection are
standard operations.  The new assertion is the joint one: for an arbitrary
redundant candidate block, the same quotient residual (i) identifies its
intrinsic new dimension, (ii) provides its local stability spectrum,
(iii) gives the exact target-error reduction \(\Gamma_q\), and (iv) enters the
updated depth-uniform Riesz bound.  Classical orthogonal greedy methods use
target correlation to select atoms, but do not by themselves provide this
successive quotient-stability certificate for singular structured blocks.
Conversely, a Riesz audit alone does not select a useful block.

Application-specific information still enters through the candidate blocks
and their costs.  A Galerkin method may supply locally marked enrichments,
while a recycled Krylov method may supply Ritz blocks.  In the latter case,
positive Schur innovation certifies a stable recycle space but does not by
itself certify iteration reduction; classical reduced-spectrum analysis is
still required.

Thus quotienting identifies equivalent functional directions, the Schur
spectrum measures the stability of the remaining innovation, and $\Gamma_q$
gives its exact reduction of the current best-approximation error.  The next
section tests these statements separately and jointly.

\section{Examples and numerical results}
\label{sec:numerics}

The aim of this section is not to model a complete real-world application or
to establish broad performance superiority.  Instead, deliberately controlled
examples isolate the principal mechanisms of the method and show what
additional information the quotient--innovation analysis provides.  The
examples are drawn from familiar settings---nonstationary recurrences, lifted
multiscale approximation, adaptive enrichment, and recycled Krylov
subspaces---so that the same analysis also illustrates how the method enters
established approximation and numerical procedures.

The examples test the principal assertions separately.  An alternating
recurrence and a level-dependent lifted family test the stability theorem where
diagonal dominance fails.  Adaptive selection on the lifted family tests the
exact block-enrichment identity.  A recycled-CG sequence then separates
stable quotient representation from downstream solver utility.  The first
two constructions are deterministic; the Krylov table aggregates random bulk
realizations while preserving an exactly specified spectral structure.

Accordingly, the experiments return to the three settings that motivated the
framework: failure of diagonal dominance, redundancy in lifted multiscale
families, and reuse of historical numerical subspaces
\cite{ChuiHeStockler2004,FeingoldVarga1962,ParksEtAl2006}.

\paragraph{Evaluation protocol.}
Each example separates quantities computed from the theory from quantities
used only for validation.  The former are obtained from local residual Gram
operators, elimination bounds, or the inputs to \Cref{alg:ssr}; the latter are
finite Gram eigenvalues, measured approximation errors, or solver iteration
counts.  No validation quantity is substituted into a theorem bound.
The following map records the precise correspondence.

\begin{center}
\centering
\scriptsize
\textbf{Theory-to-experiment map.}\quad ``Prediction'' denotes a quantity
computed without the independent check in the last column.\par\smallskip
\begin{tabular}{@{}p{0.19\textwidth}p{0.24\textwidth}p{0.27\textwidth}p{0.22\textwidth}@{}}
\toprule
Example & Inputs & Theoretical output or prediction & Independent check\\
\midrule
Alternating recurrence
& $(\rho_\ell)$, truncation $m$, target coefficients
& residual bounds $(\delta,\Delta)$, elimination bounds $(M,K)$, Riesz and
truncation bounds from \Cref{thm:quotientinnovation,cor:approximation}
& finite quotient-Gram eigenvalues and measured truncation error\\
Lifted families
& masks $(\alpha_j)$, parent map, depth $J$
& kernel dimension, innovation spectrum, quotient lower bound, reconstruction
and truncation bounds
& quotient-Gram eigenvalues, amplification, and function-space error\\
Adaptive paired lifting
& target $f$, $V_0$, candidate pool, spectral cutoff, score, and dimension budget
& \Cref{alg:ssr}: selected blocks, intrinsic ranks, exact gains, and spaces $V_B$
& observed squared-error decrease, split-label invariance, and error at budget $B$\\
Recycled CG
& $(A_t,b_t)$, labelled recycle block $U$, setup cost
& quotient lower bound and classical reduced-spectrum iteration prediction
& accepted/rejected blocks and measured matrix--vector products\\
\bottomrule
\end{tabular}
\end{center}

\subsection{An alternating level-dependent recurrence}
\label{sec:nonstationaryexample}

Level-dependent recurrences are elementary models for nonstationary refinement
and operator-generated sampling, where the generating rule changes with scale
or iteration \cite{AldroubiEtAl2017,CohenDyn1996,DynLevin2002}.  The first
experiment isolates stability without redundancy or target selection.  It
asks whether conditional innovations can certify a strongly interacting,
nonstationary family after a classical row-sum estimate has become vacuous.

Let $(e_\ell)_{\ell\geq1}$ be an orthonormal sequence in
$L^2(\nu)$ and define the recursively generated system
\begin{equation}
 h_1=e_1,\quad
 h_\ell=\rho_\ell h_{\ell-1}
 +\sqrt{1-\rho_\ell^2}\,e_\ell,\quad
 \rho_\ell=\begin{cases}0.8,&\ell\text{ even},\\0.6,&\ell\text{ odd}.
 \end{cases}
 \label{eq:alternatingrecursion}
\end{equation}
This is level dependent: the component map alternates with the index and the Gram
matrix is not Toeplitz.  It is a special case of
$T_\ell(z,x)=\rho_\ell z+\sqrt{1-\rho_\ell^2}e_\ell(x)$ with identity
observable.

\begin{proposition}[Strict gain over diagonal dominance]
\label{prop:alternatingexample}
For the system \eqref{eq:alternatingrecursion},
\[
 \ip{h_i}{h_j}=\prod_{k=i+1}^j\rho_k\quad(i<j),
 \quad D_1=1,\quad D_\ell=1-\rho_\ell^2.
\]
For every truncation index,
\[
 \norm{L_m}\leq\frac1{1-r},\quad
 \norm{L_m^{-1}}\leq1+r,\quad r=0.8.
\]
Consequently
\begin{equation}
 \frac{1-r}{1+r}\norm c^2
 \leq\norm{S^{(m)}c}^2
 \leq\frac1{(1-r)^2}\norm c^2,
 \quad \frac{1-r}{1+r}=\frac19.
 \label{eq:alternatingbounds}
\end{equation}
In contrast, the diagonal-dominance lower bound is already negative for eight
blocks and converges to approximately $-3.53846$ along the reported truncations.
\end{proposition}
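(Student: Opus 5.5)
The plan is to exploit the fact that \eqref{eq:alternatingrecursion} is already in near-orthogonal form, so every object in \Cref{lem:compatibility} can be written down explicitly. Throughout, write $s_\ell=\sqrt{1-\rho_\ell^2}$ for $\ell\ge2$ and $s_1=1$.

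\emph{Gram entries and innovations.} The Gram entries come from induction on $j$. Since $e_j\perp h_i$ for $i<j$, we get $\ip{h_i}{h_j}=\rho_j\ip{h_i}{h_{j-1}}$. Also $\norm{h_\ell}^2=\rho_\ell^2+s_\ell^2=1$, and these together give the product formula. A second induction shows $\operatorname{span}\{h_1,\dots,h_\ell\}=\operatorname{span}\{e_1,\dots,e_\ell\}$. Hence $V_{\ell-1}$ is spanned by $e_1,\dots,e_{\ell-1}$, the residual is $R_\ell 1=(I-P_{\ell-1})h_\ell=s_\ell e_\ell$, and by \eqref{eq:innovationdistance} $D_\ell=s_\ell^2$. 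This gives $D_1=1$ and $D_\ell=1-\rho_\ell^2$. There is no redundancy, so all quotients are trivial, and $\delta=1-r^2=0.36$, $\Delta=1$.

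\emph{The elimination factor.} Unrolling the recursion gives the expansion $h_\ell=\sum_{k\le\ell}\bigl(\prod_{i=k+1}^{\ell}\rho_i\bigr)s_ke_k$, that is, $h_\ell$ expressed in the residual directions $R_k1$. The residual coordinates of $S^{(m)}c$ are therefore
\[
 y_k=(L_m^*c)_k=\sum_{\ell=k}^{m}\Bigl(\prod_{i=k+1}^{\ell}\rho_i\Bigr)c_\ell ,
\]
which is triangular with unit diagonal, as \Cref{lem:compatibility} requires. Every entry is bounded by $r^{\ell-k}$. The Schur test (row and column sums at most $\sum_{j\ge0}r^j$) then gives $\norm{L_m}\le 1/(1-r)$, uniformly in $m$.

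For the inverse, the coordinates satisfy the two-term relation $c_k=y_k-\rho_{k+1}y_{k+1}$, with $\rho_{m+1}:=0$. So $L_m^{-*}$ is bidiagonal with off-diagonal entries of modulus at most $r$, and $\norm{L_m^{-1}}\le1+r$. This bidiagonal identification is the one step I would check most carefully, making sure that the indexing convention $L_m=C_m^*$ matches the orientation used here.

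\emph{Riesz bounds.} Insert the constants into \Cref{thm:quotientinnovation}. The lower constant is $\delta/K^2=(1-r)(1+r)/(1+r)^2=(1-r)/(1+r)=1/9$, and the upper constant is $\Delta M^2=1/(1-r)^2$. This is \eqref{eq:alternatingbounds}.

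\emph{Diagonal dominance fails.} The Gershgorin-type lower bound is $1-\max_i\sum_{j\ne i}\lvert\ip{h_i}{h_j}\rvert$. For an interior row $i$, split the sum into a forward geometric series in the alternating products and a backward one. Grouping consecutive factors into pairs with product $0.48$, each tail sums to $(\rho+0.48)/0.52$, where $\rho$ is the first factor in that direction.

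The worst rows are odd $i$. There the forward series starts with $\rho_{i+1}=0.8$ and sums to $1.28/0.52$, while the backward series starts with $\rho_i=0.6$ and sums to $1.08/0.52$. The total is $2.36/0.52\approx4.53846$, so the bound tends to $1-4.53846=-3.53846$ as the truncation grows. The remaining claim, that the bound is already negative at $m=8$, I would verify by evaluating the finite row sums of the $8\times8$ Gram matrix directly. Only the central odd row needs to be checked; its partial sums already exceed $1$ after a few terms.

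I expect the main obstacle to be bookkeeping rather than analysis. Two points need care: matching the orientation and transpose conventions of $C_m$ and $L_m$ when reading off the bidiagonal inverse, and the finite-truncation Gershgorin computation.
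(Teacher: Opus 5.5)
Your proposal is correct and follows essentially the paper's route: explicit residuals $s_\ell e_\ell$ giving $D_\ell$, then the lower-triangular factor with product entries, whose inverse is the bidiagonal compression of $I-RZ$ (so $\norm{L_m^{-1}}\le 1+r$), then substitution of $\delta=1-r^2$, $\Delta=1$, and finally the paired geometric row sums with limit $2.36/0.52$. The differences are minor: you bound $\norm{L_m}$ by a Schur test on the entries $r^{\ell-k}$ rather than by the Neumann series (an equivalent step), the transpose convention you flag is immaterial since $\norm{L_m^{-1}}=\norm{L_m^{-*}}$, and ``the worst rows are odd $i$'' should be dropped, because even interior rows give the same limiting sum $1.08/0.52+1.28/0.52$ (at $m=8$ rows $4$ and $5$ tie at $3.2624$), which leaves the value $-3.53846$ unchanged.
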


\begin{proof}
\proofstep{Step 1: compute correlations and residual Gram operators.}
Independence in the orthogonal directions gives the displayed product formula.
Projection of $h_\ell$ onto the preceding span is
$\rho_\ell h_{\ell-1}$, so the residual is
$\sqrt{1-\rho_\ell^2}e_\ell$ and its Gram value is $D_\ell$.

\proofstep{Step 2: identify elimination.}
In orthogonal-residual coordinates, $L_m$ is the lower-triangular resolvent whose
subdiagonals are products of the $\rho_\ell$.  Its inverse is bidiagonal:
it has identity diagonal and entries $-\rho_\ell$ on the first subdiagonal.
If $Z$ is the unilateral shift and $R$ is the diagonal multiplier by
$\rho_\ell$, then $L_m^{-1}$ is a compression of $I-RZ$.  Hence
$\norm{L_m^{-1}}\leq1+r$ and the Neumann expansion gives
$\norm{L_m}\leq(1-r)^{-1}$.

\proofstep{Step 3: apply the uniform quotient Riesz theorem.}
Here $\delta=1-r^2$ and $\Delta=1$.  Substitution in
\eqref{eq:quotientriesz} gives
$(1-r^2)/(1+r)^2=(1-r)/(1+r)$ and the stated upper bound.

\proofstep{Step 4: show classical failure.}
The off-diagonal row sum is the sum of the positive products above.  Direct
finite summation at level $8$ exceeds one; the alternating geometric series
gives the limiting value reported below.  Therefore the classical lower bound
$1-\max_i\sum_{j\ne i}|G_{ij}|$ is negative although
\eqref{eq:alternatingbounds} remains positive for every truncation index.
\end{proof}

\begin{table}[H]
\centering
\small
\caption{Numerical results for the alternating level-dependent system.
``Classical'' is the absolute
row-sum lower bound; ``Schur'' is the uniform quotient bound $1/9$.
The target coefficients are $c_j=j^{-3/2}$ and truncation occurs at $m/2$.
The last inequality is the upper-Riesz prediction.}
\label{tab:nonstationaryexample}
\begin{tabular}{@{}rrrrr@{}}
\toprule
$m$ & Classical lower & Schur lower & $\lambda_{\min}(G_m)$
& Actual error / predicted bound\\
\midrule
8   & $-2.262400$ & $0.111111$ & $0.158297$ & $0.214525/0.661404$\\
32  & $-3.522855$ & $0.111111$ & $0.154141$ & $0.077756/0.184500$\\
128 & $-3.538462$ & $0.111111$ & $0.153865$ & $0.021668/0.047407$\\
256 & $-3.538462$ & $0.111111$ & $0.153851$ & $0.011042/0.023812$\\
\bottomrule
\end{tabular}
\end{table}

The example demonstrates the additional power of the main theorem rather than
only its generality.  Strong accumulated correlations defeat diagonal
dominance, yet local residual Gram bounds and a bidiagonal inverse
elimination estimate give a nontrivial lower bound.  The observed minimum eigenvalue
stays above the predicted constant, and the truncation error decreases within
the predicted envelope.  The construction is deterministic and involves
neither fitted parameters nor seed selection.

\subsection{Redundant level-dependent lifted Haar refinement}
\label{sec:liftedhaar}

We next add the difficulty suppressed in the recurrence: exact redundant
labels.  This tests simultaneously whether the quotient
removes only the redundancy and whether the remaining innovations retain a
positive depth-uniform stability estimate.

Nonstationary subdivision changes the refinement mask from one scale to the
next \cite{CohenDyn1996}.  Lifting constructs second-generation wavelets and
factors wavelet transforms into local update steps
\cite{Sweldens1998,DaubechiesSweldens1998}.  We now apply the
quotient--Schur criterion to a level-dependent lifting scheme.

Let $\chi_0$ be the normalized constant function on $[0,1]$ and let
$\psi_{j,k}$ be the orthonormal Haar wavelets, with $j\geq1$ and
$0\leq k<2^{j-1}$.  Write
\[
 p_{1,0}=\chi_0,\quad
 p_{j,k}=\psi_{j-1,\lfloor k/2\rfloor}\quad(j\geq2)
\]
for the parent at the preceding refinement level.  Define
\begin{equation}
g_0=\chi_0,\quad
 g_{j,k}=\frac{\psi_{j,k}+\alpha_jp_{j,k}}
 {\sqrt{1+\alpha_j^2}},\quad
 \alpha_j=\begin{cases}0.55,&j\text{ odd},\\0.70,&j\text{ even}.
 \end{cases}
 \label{eq:liftedhaar}
\end{equation}
The lifting mask changes at every level.  Each update is parent-local and
triangular, and the nested approximation spaces remain the Haar spaces.

We retain both the original Haar detail and its lifted enrichment.  Thus the
level-$j$ generating block contains
\begin{equation}
 h_{j,k}=\psi_{j,k},\qquad g_{j,k}
 =\frac{\psi_{j,k}+\alpha_jp_{j,k}}{\sqrt{1+\alpha_j^2}}.
 \label{eq:retainedlifted}
\end{equation}
This is a natural redundant enrichment: the original multiresolution detail is
kept while a level-dependent lifted detail is added.  Since the parent is in
the preceding space, the exact relation
\[
 \sqrt{1+\alpha_j^2}\,g_{j,k}-h_{j,k}-\alpha_jp_{j,k}=0
\]
creates one synthesis-kernel direction per enrichment.  The ordinary labelled
Gram matrix is singular, while the quotient retains the genuinely new Haar
direction.

At a finite level, rooted dyadic-tree automorphisms permute the Haar functions
compatibly with the parent map.  Since $\alpha_j$ is position independent,
these permutations satisfy \eqref{eq:intertwining}.  By
\Cref{prop:equivariance}, positions in one tree orbit have identical residual
Gram calculations; this observation is ancillary to the bound below.

\begin{proposition}[Quotient Riesz bound for redundant level-dependent lifting]
\label{prop:liftedhaar}
For the redundant block \eqref{eq:retainedlifted}, the residual Gram operator
at level $j$ is
\[
 D_j=I_{2^{j-1}}\otimes
 \begin{pmatrix}1&s_j\\s_j&s_j^2\end{pmatrix},
 \qquad s_j=(1+\alpha_j^2)^{-1/2}.
\]
Each $2\times2$ block has one zero eigenvalue, which is quotiented out, and one
positive innovation eigenvalue $d_j=1+s_j^2$.  The canonical quotient
elimination factor is $L=I+N$.  If
\[
 \beta_j=\frac{\alpha_js_j^2}{\sqrt{1+s_j^2}\,r_{j-1}},\qquad
 r_{j-1}=\begin{cases}1,&j=1,\\
 \sqrt{1+s_{j-1}^2},&j\ge2,
 \end{cases}
\]
then
\begin{equation}
 \norm N=q=\sup_{j\geq1}\sqrt2\,\beta_j
 =0.4491475157<1.
 \label{eq:liftedq}
\end{equation}
Consequently every finite level truncation has the computable lower bound
\begin{equation}
 A_{\rm lift}=(1-q)^2=0.3034384594.
 \label{eq:liftedlower}
\end{equation}
The labelled Gram lower eigenvalue is zero, with one exact kernel direction per
lifted enrichment, while the absolute row-sum criterion is negative.
\end{proposition}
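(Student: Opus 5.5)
The plan is to compute everything explicitly in the orthonormal Haar basis, identify the canonical factorization of \Cref{lem:compatibility} block by block, and then invoke \Cref{thm:quotientinnovation}, using the estimate of \Cref{prop:regressioncriterion} for $L=I+N$.

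\textbf{Step 1: residual Gram operators.} Treat $g_0=\chi_0$ as level $0$. Since every $g_{i,k}$ with $i<j$ lies in the Haar space spanned by $\chi_0$ and $\psi_{i,k}$, $i\le j-1$, the space $V_{j-1}$ is exactly that Haar space. Because $p_{j,k}\in V_{j-1}$ and $\psi_{j,k}\perp V_{j-1}$, we get
\[
 R_jh_{j,k}=\psi_{j,k},\qquad R_jg_{j,k}=s_j\psi_{j,k}.
\]
Different $k$ give orthogonal residuals. Hence
\[
 D_j=I_{2^{j-1}}\otimes\begin{pmatrix}1&s_j\\ s_j&s_j^2\end{pmatrix}.
\]
Its kernel vector $(s_j,-1)$ is the relation $\sqrt{1+\alpha_j^2}\,g_{j,k}-h_{j,k}-\alpha_jp_{j,k}=0$ after removing $p_{j,k}\in V_{j-1}$. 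Its nonzero eigenvalue is $d_j=1+s_j^2$, with unit eigenvector $u_j=(1,s_j)/\sqrt{1+s_j^2}$. Thus $W_j=\operatorname{span}\{\psi_{j,k}\}$, with $R_ju_j=\sqrt{1+s_j^2}\,\psi_{j,k}$. This gives $\delta=\inf_j d_j\ge1$ and $\Delta\le2$, and it yields the kernel count of one exact direction per enrichment. The labelled Gram matrix is therefore singular, so its smallest eigenvalue is $0$.

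\textbf{Step 2: the elimination factor.} Represent each quotient class at level $j$ by its canonical representative in $(\ker D_j)^\perp=\operatorname{span}u_j$. The synthesized function of $u_j$ at position $(j,k)$ is
\[
 \sqrt{1+s_j^2}\,\psi_{j,k}+\frac{s_j}{\sqrt{1+s_j^2}}\,\alpha_js_j\,p_{j,k}.
\]
The parent $p_{j,k}$ is $\psi_{j-1,\lfloor k/2\rfloor}$, or $\chi_0$ when $j=1$. In the level-$(j-1)$ quotient coordinate this parent equals $r_{j-1}^{-1}$ times the synthesized innovation unit, where $r_{j-1}=\sqrt{1+s_{j-1}^2}$ and $r_0=1$ for $\chi_0$. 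Normalizing by the diagonal innovation coordinate, which is what gives identity diagonal blocks, the only off-diagonal entries of $C_m$ are $\beta_j$. They link each child to its parent, so $N$ is supported on the first block subdiagonal.

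\textbf{Step 3: norm of $N$.} The block $N_{j,j-1}$ maps each parent to its two children with weight $\beta_j$, so $\norm{N_{j,j-1}}=\sqrt2\,\beta_j$. Different levels have disjoint row and column supports, so $\norm N=\sup_j\sqrt2\,\beta_j$. Alternatively, \Cref{prop:regressioncriterion} with entrywise row and column sums gives the same bound.

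Evaluating this supremum numerically over the parity pattern of $\alpha_j$ and the case $j=1$ gives $q\approx0.449$. \Cref{prop:regressioncriterion} then gives $K=(1-q)^{-1}$. Since $\delta\ge1$, \Cref{thm:quotientinnovation} yields the lower bound $(1-q)^2$. Finally, negativity of the absolute row-sum criterion follows by summing the entries of one row of the labelled Gram matrix: the pair $\langle h_{j,k},g_{j,k}\rangle=s_j$ together with the parent correlations already exceeds one.

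\textbf{Main obstacle.} The delicate point is Step 2. One must check that the entries of $C_m$ in the canonical quotient coordinates (the realization $(\ker D_j)^\perp$, with minimal-norm representatives) really are $\beta_j$ with the stated normalization $r_{j-1}$, and not a differently scaled regression. This matters because the theorem's constants depend on that realization. I would verify it by directly computing $\norm{S[c]}$ for a two-level example against $\ip{\mathcal D L^*c}{L^*c}$.
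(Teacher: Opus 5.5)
Your route is the paper's own. You project $(h_{j,k},g_{j,k})$ off $V_{j-1}$ to get the rank-one block, and take the unit quotient direction $(1,s_j)/\sqrt{1+s_j^2}$. You synthesize it to read off the parent coefficient $\alpha_js_j^2/\sqrt{1+s_j^2}$, then divide by $r_{j-1}$ to get $\beta_j$. You compute $\norm{N}$ from the parent-incidence norm and disjoint level supports, and finish with \Cref{prop:regressioncriterion} and $\delta=1$.

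The realization issue you flag is harmless for a lower bound. Every class has a unique representative $c\in\bigoplus_j(\ker D_j)^\perp$. Your Step~2 gives $\norm{Sc}^2=\sum_{j,k}d_j\lvert((I+N^*)c)_{j,k}\rvert^2\geq(1-\norm{N})^2\norm{c}^2$ (with $d_0=1$ for $\chi_0$). Since $\norm{[c]}\leq\norm{c}$ for any representative, the bound transfers to the global quotient norm.

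One step fails, however, and it fails in the same way in the paper's own proof. In Step~3 you assert $\norm{N_{j,j-1}}=\sqrt2\,\beta_j$ for all $j\geq1$ because each parent has two children. But level $1$ has the single location $k=0$ (since $0\leq k<2^0$), so the root $\chi_0$ has only one child and $\norm{N_{1,0}}=\beta_1\approx0.3176$. The quoted $q=0.4491475157$ is exactly $\sqrt2\,\beta_1$. For the other levels, $\sqrt2\,\beta_j\approx0.3866$ when $j$ is even and $\approx0.3474$ when $j\geq3$ is odd. Hence
\[
\norm{N}\leq\max\bigl(\beta_1,\sup_{j\geq2}\sqrt2\,\beta_j\bigr)\approx0.3866<q,
\]
so the equality in \eqref{eq:liftedq} holds only as $\norm{N}\leq q$. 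That inequality is all the Neumann-series step uses, so $(1-q)^2$ is still a valid lower bound. It is just not sharp: the same argument gives about $0.376$.
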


\begin{proof}
\proofstep{Step 1: identify the orthogonal residual.}
Every parent lies in the preceding Haar space, whereas $\psi_{j,k}$ is
orthogonal to it.  Projection maps $(h_{j,k},g_{j,k})$ to
$(\psi_{j,k},s_j\psi_{j,k})$, proving the rank-one residual Gram block.  Its
unit positive quotient direction $(1,s_j)/\sqrt{1+s_j^2}$ has innovation
energy $d_j=1+s_j^2$.

\proofstep{Step 2: compute the regression block.}
Synthesis of that unit quotient direction gives
\[
 \sqrt{1+s_j^2}\,\psi_{j,k}
 +\frac{\alpha_js_j^2}{\sqrt{1+s_j^2}}p_{j,k}.
\]
After division by the preceding innovation scale $r_{j-1}$, the regression
coefficient is $\beta_j$.  Each parent has two children, so the
parent-incidence map has norm $\sqrt2$.  Distinct level blocks have disjoint
input and output coordinates; therefore the one-subdiagonal operator has norm
\eqref{eq:liftedq}.

\proofstep{Step 3: certify synthesis.}
Apply \Cref{prop:regressioncriterion} with $q<1$ and
$\delta=1$: the initial constant has innovation one, while every positive
detail innovation satisfies $d_j>1$.  The lower bound of
\Cref{thm:quotientinnovation} is $(1-q)^2$, giving
\eqref{eq:liftedlower}.  Finite row summation gives the classical failure in
\Cref{tab:liftedhaar}.

\proofstep{Step 4: identify exact redundancy.}
The zero residual direction $(-s_j,1)$ synthesizes
$-s_jh_{j,k}+g_{j,k}=\alpha_js_jp_{j,k}$, which lies in the preceding space.
Combining it with the corresponding parent coefficient gives an exact global
synthesis-kernel vector.  Conversely, the positive quotient direction above
spans the one-dimensional orthogonal innovation.  Every lifted enrichment
therefore contributes one null label and one intrinsic new direction.
\end{proof}

For an approximation test, fix the deterministic target with orthonormal Haar
coefficients
\[
 \theta_0=1,\quad
 \theta_{j,k}=2^{-5j/4}
 \cos\!\left(\frac{\pi(k+1/2)}{2^{j-1}}\right).
\]
We compute its minimum-norm redundant-frame representation through level $11$
and truncate after level $J$.  This is a fixed Besov-type multiscale target; there is no
fitted data, random choices, or tuned mask.

\begin{table}[H]
\centering
\scriptsize
\caption{Stability audit for redundant level-dependent lifted Haar refinement.
``Raw'' is the labelled Gram lower eigenvalue, ``Diag.'' is the absolute
row-sum estimate on the labelled Gram matrix, $\delta$ is the smallest positive
successive innovation, $A$ is the uniform quotient--Schur lower bound, and the
last two columns are the actual quotient-Gram extremal eigenvalues.}
\label{tab:liftedhaar}
\begin{tabular}{@{}rrrrrrrr@{}}
\toprule
$J$ & Labelled & Raw & Diag. & $\delta$ & $A$ &
$\lambda_{\min}(\widehat G_J)$ & $\lambda_{\max}(\widehat G_J)$\\
\midrule
3  & 15   & $0$ & $-2.013637$ & $1.000000$ & $0.303438$ & $1.000000$ & $3.126065$\\
5  & 63   & $0$ & $-2.013637$ & $1.000000$ & $0.303438$ & $1.000000$ & $3.364594$\\
7  & 255  & $0$ & $-2.013637$ & $1.000000$ & $0.303438$ & $1.000000$ & $3.447432$\\
9  & 1023 & $0$ & $-2.013637$ & $1.000000$ & $0.303438$ & $1.000000$ & $3.485219$\\
10 & 2047 & $0$ & $-2.013637$ & $1.000000$ & $0.303438$ & $1.000000$ & $3.497974$\\
\bottomrule
\end{tabular}
\end{table}

\begin{table}[H]
\centering
\small
\caption{Operational consequences of the quotient constants.  ``Amplification''
is the exact worst-case norm of quotient coefficient reconstruction from a
function perturbation; $A^{-1/2}=1.8154$ is its uniform theoretical bound.
The last column compares actual coefficient-truncation error with the
upper-Riesz envelope.}
\label{tab:liftedhaarapprox}
\begin{tabular}{@{}rrrrr@{}}
\toprule
$J$ & Intrinsic generators & Amplification & $A^{-1/2}$ &
Actual truncation / bound\\
\midrule
3  & 8    & $1.000$ & $1.815$ & $0.082476/0.096199$\\
5  & 32   & $1.000$ & $1.815$ & $0.029247/0.033638$\\
7  & 128  & $1.000$ & $1.815$ & $0.010292/0.011898$\\
9  & 512  & $1.000$ & $1.815$ & $0.003444/0.004125$\\
10 & 1024 & $1.000$ & $1.815$ & $0.001807/0.002495$\\
\bottomrule
\end{tabular}
\end{table}

This is a redundant local multiresolution transform with masks that change by
refinement level.  The ordinary labelled lower bound is identically zero and
the classical diagonal estimate is negative.  After exact quotienting, its
verified positive lower bound uses only two mask values and the norm $\sqrt2$
of the local parent-incidence operator, not an eigenvalue calculation on the
full Gram matrix.  The quotient eigenvalues remain positive through $2{,}047$
labelled generators, worst-case reconstruction amplification remains below the
uniform prediction, and every measured truncation error lies below its
upper-Riesz envelope.  All reported quantities are obtained directly from the
stated masks and finite Gram operators.

\paragraph{Where the criterion adds value for wavelets.}
For an orthonormal wavelet basis no additional test is needed: its exact Riesz
bounds are $A=B=1$.  The present criterion is intended instead for systems in
which orthogonality is deliberately surrendered to obtain level-dependent
masks, lifting, irregular geometry, localization, redundancy, or adaptive
refinement.  In that setting it (i) removes exact redundancy before measuring
conditioning, (ii) certifies stable coefficient recovery from local
refinement data, and (iii) turns coefficient truncation into a controlled
function-space approximation.  Thus the claim is not that classical wavelets
require new stability theory; it is that customized level-dependent wavelets can
retain rigorous stability guarantees after their standard orthogonality
argument is no longer available.

\subsection{A paired lifting test without an orthonormal subfamily}
Paired local generators arise naturally in biorthogonal lifting and
multiwavelet constructions, where perfect reconstruction need not leave an
orthonormal subfamily \cite{DaubechiesSweldens1998,MaesBultheel2008,Sweldens1998}.
The preceding construction retains the Haar basis and is therefore a useful
control, but not a decisive test: stability follows from that subfamily alone.
We remove this shortcut by using two generators at every detail location,
\[
 g_{j,k}^{\sigma}=\frac{\psi_{j,k}+\alpha_j^{\sigma}p_{j,k}}
 {\sqrt{1+(\alpha_j^{\sigma})^2}},\quad \sigma\in\{+,-\},
\]
where $(\alpha_j^+,\alpha_j^-)=(0.55,-0.30)$ at odd levels and
$(0.70,-0.45)$ at even levels.  Both masks are nonzero, so no unmodified Haar
detail is present.  With $n_j^\sigma=\sqrt{1+(\alpha_j^\sigma)^2}$,
\[
 n_j^+g_{j,k}^+-n_j^-g_{j,k}^-
 -(\alpha_j^+-\alpha_j^-)p_{j,k}=0,
\]
giving one exact kernel direction per pair.  Projection off the preceding
space gives the rank-one residual block
\[
 D_{j,k}=\begin{pmatrix}(s_j^+)^2&s_j^+s_j^-\\
 s_j^+s_j^-&(s_j^-)^2\end{pmatrix},\qquad
 s_j^\sigma=(1+(\alpha_j^\sigma)^2)^{-1/2}.
\]
Its zero direction is removed by the quotient and its positive innovation is
$d_j=(s_j^+)^2+(s_j^-)^2$.  Compatible elimination in level order gives
$G=LDL^*$ on the quotient and the computable certificate
\begin{equation}
 A_J=\frac{\lambda_{\min}(D)}{\norm{L^{-1}}^2},\qquad
 B_J=\lambda_{\max}(D)\norm L^2.
 \label{eq:pairedcertificate}
\end{equation}

\begin{table}[H]
\centering\scriptsize
\caption{Paired nonstationary lifting without a retained Haar basis.  The raw
labelled lower eigenvalue is zero; ``Diag.'' is the classical absolute
row-sum lower estimate.}
\label{tab:pairedlifting}
\begin{tabular}{@{}rrrrrrr@{}}
\toprule
$J$&Labels&Kernel&Diag.&$A_J$&$\lambda_{\min}$&$\lambda_{\max}$\\
\midrule
3&15&7&$-2.388104$&$1.145257$&$1.298865$&$2.727972$\\
5&63&31&$-2.629873$&$1.135999$&$1.298895$&$2.797473$\\
7&255&127&$-2.629873$&$1.131821$&$1.298895$&$2.819940$\\
9&1023&511&$-2.629873$&$1.129554$&$1.298895$&$2.829611$\\
10&2047&1023&$-2.629873$&$1.129056$&$1.298895$&$2.833548$\\
\bottomrule
\end{tabular}
\end{table}

At $J=10$, the certificate bounds reconstruction amplification by
$A_J^{-1/2}=0.9412$, versus the exact $0.8774$.  The fixed-target truncation
error is $0.001864$, below the upper-Riesz envelope $0.002693$.  Hence the
result is not inherited from a known stable basis: quotienting removes 1,023
exact redundancies, diagonal dominance fails, and successive innovations
still predict stable reconstruction and truncation.  This is a deterministic
audit of the stated finite families.

\paragraph{Adaptive selection with competing redundant blocks.}
Residual-driven selection is standard in greedy and adaptive approximation
\cite{CohenDahmenDeVore2001,DeVorePetrovaWojtaszczyk2013,Temlyakov2008}.
Having established representation stability, we next test exact target attribution.  We use
the same paired construction to test the procedure of
\Cref{sec:algorithm}, rather than only audit a prescribed hierarchy.  Through
level $J=7$, every location-level pair $(g_{j,k}^+,g_{j,k}^-)$ is offered as a
candidate block.  Starting from the constant, the adaptive rule selects the
block maximizing $\Gamma_q/\dim W_q$.  The comparator accepts the same blocks
in ordinary level order.  Both use the fixed target above; neither method is
given its coefficients.  Cost is intrinsic retained dimension, so a
conditionally redundant pair costs one direction after quotienting.

This instantiates \Cref{alg:ssr} with
$V_0=\operatorname{span}\{\chi_0\}$, the location-level pairs through $J=7$
as $\mathcal Q(V)$, the relative cutoff
$\tau_q=10^{-10}\lambda_{\max}(D_q)$, $\eta=0$,
$C(V)=\dim V$, budgets $B\in\{8,16,32,64\}$, and
$s=\Gamma_q/\dim W_q$.  The innovations $W_q$ are stored orthonormally, so
the updated synthesis in these coordinates has quotient--Riesz lower bound
one; the nontrivial stability quantities retained from each structured
candidate are its quotient rank and Schur spectrum.  The algorithm outputs
the selected path and spaces $V_B$.  We validate them by comparing every
predicted $\Gamma_q$ with the observed squared-error decrease and by repeating
the run after normalized splitting of every labelled generator.

\begin{table}[H]
\centering
\small
\caption{Target approximation by adaptive SR block selection and prescribed
level order on the paired lifting family.  The last column is the relative
error reduction at equal intrinsic budget.}
\label{tab:sradaptive}
\begin{tabular}{@{}rrrr@{}}
\toprule
Intrinsic budget & Adaptive SR error & Level-order error & Reduction\\
\midrule
8  & $0.069846$ & $0.077125$ & $9.44\%$\\
16 & $0.040203$ & $0.045188$ & $11.03\%$\\
32 & $0.023324$ & $0.025708$ & $9.27\%$\\
64 & $0.011404$ & $0.013139$ & $13.21\%$\\
\bottomrule
\end{tabular}
\end{table}

The improvement is modest but uniform across the reported budgets.  More
importantly for the quotient claim, replacing every labelled generator by two
normalized copies changes the final adaptive projector by
$6.1\times10^{-15}$ in operator norm and leaves every reported error unchanged
to displayed precision.  Across the complete paths, the largest discrepancy
between the predicted gain \eqref{eq:exactgain} and the observed decrease in
squared error is $6.94\times10^{-18}$.  The smallest retained innovation is
$0.2992$ and the largest local quotient condition number is $5.685$, so the
selection is not obtained by admitting nearly null directions.  A raw
positive-definiteness test would instead reject each artificially split block.

As a deterministic sensitivity audit, we vary the coefficient decay
\(2^{-sj}\) over five exponents:
\(s=0.75,1,1.25,1.5,\) and \(1.75\).  We use cosine phases \(0.25\) and
\(0.5\), giving ten structured targets and forty budget comparisons.
Adaptive SR selection has lower error in all forty cases.  At budgets
$8,16,32,64$, respectively, its mean relative error reductions are
$7.83\%$, $8.35\%$, $8.78\%$, and $12.70\%$; the ranges are
$4.62$--$10.45\%$, $4.85$--$11.56\%$, $5.57$--$11.72\%$, and
$7.65$--$18.80\%$.  These targets share a multiscale construction and are not
evidence for universal superiority, but the gain is not confined to the one
displayed target.

This experiment does not show that the SR score is universally preferable to
level order; the score is target dependent and the candidate pool remains part
of the approximation design.  It shows the narrower operational point needed
here: the same Schur calculation can quotient a changing redundant family,
predict the exact benefit of each admissible enrichment, and improve the
error--cost curve without making the answer depend on its labels.  The
sensitivity audit reports every prescribed decay exponent, phase, and budget,
rather than selecting favorable instances.

\subsection{Abstract recycled conjugate gradients}
\label{sec:recycledcg}

Recycled and augmented Krylov methods retain subspaces across related linear
systems, with acceleration governed by the spectrum remaining after
projection \cite{ParksEtAl2006,SoodhalterEtAl2020}.  The final example examines the distinction between stable representation and
solver utility.  The preceding selection
experiment ranks new approximation blocks by target
gain. A recycled iterative solver poses a complementary question: when should
a previously useful block be carried to the next problem? This can be tested
without introducing a differential equation. For a sequence of symmetric
positive-definite systems \(A_t x_t=b_t\), let \(U\) be a possibly redundantly
labelled recycle space and let \(P_U^{A_t}\) denote the \(A_t\)-orthogonal
projector onto its range. Conjugate gradients on \(U^{\perp_{A_t}}\) has the
classical estimate
\[
 \norm{e_k}_{A_t}\leq
 2\left(\frac{\sqrt{\kappa_U}-1}{\sqrt{\kappa_U}+1}\right)^k
 \norm{e_0}_{A_t},\qquad
 \kappa_U=
 \frac{\lambda_{\max}(A_t|_{U^{\perp_{A_t}}})}
 {\lambda_{\min}(A_t|_{U^{\perp_{A_t}}})}.
\]
The successive-space framework and this spectral estimate answer different
questions. Quotient--Schur bounds determine whether the labelled recycle
block realizes \(U\) stably; the reduced spectral floor determines whether
removing \(U\) can accelerate CG. Positive innovation alone is insufficient.

We isolate this distinction on a canonical family of 140-dimensional SPD
systems. Six eigenvalues in \([10^{-5},2\mathbin{\cdot}10^{-5}]\) share a
fixed invariant subspace, while the bulk eigenvectors, right-hand sides, and
matrices vary. Halfway through each sequence the low invariant subspace
changes once. Recycle vectors from the preceding system are supplied twice
with normalized duplicate labels. Consequently the labelled Gram matrix is
singular, although its quotient has rank six. There is no PDE model, learned
selector, or tuned angle threshold. The gate accepts a block only when the
quotient lower bound is positive and the classical CG bound predicts savings
exceeding the six setup matvecs.

The comparison is not intended to establish a generally superior Krylov
solver.  Its purpose is to test whether quotient--Schur selection remains
competitive with standard recycling while adding a representation-invariant
stability test and a stated accept--reject guarantee.

\begin{table}[H]
\centering
\small
\caption{Competitive performance with additional quotient--Schur guarantees
over 12 random bulk realizations and 144 systems.  Matrix--vector product
counts include the six applications used by an accepted recycle block.
Persistent cases exclude the initial system and the single regime change in
each realization.}
\label{tab:recycledcg}
\begin{tabular}{@{}lrrr@{}}
\toprule
Method & Mean matvecs & Persistent & Regime change\\
\midrule
Cold CG & \(114.32\) & \(114.33\) & \(114.33\)\\
Standard recycling & \(49.74\) & \(36.58\) & \(116.75\)\\
Quotient--SR spectral gate & \(\mathbf{49.53}\) & \(\mathbf{36.58}\)
 & \(\mathbf{114.33}\)\\
Current-subspace oracle & \(36.57\) & \(36.58\) & \(36.42\)\\
\bottomrule
\end{tabular}
\end{table}

The large reduction on persistent systems, from \(114.33\) to \(36.58\)
matvecs, is the standard benefit of recycling and is not attributed to the new
criterion. The criterion contributes two narrower, inspectable facts. First,
the raw duplicated Gram lower eigenvalue is zero to roundoff, whereas
quotienting gives rank six and a minimum positive value
\(1.0\mathbin{\cdot}10^{-5}\); duplication therefore changes neither projector
nor solve. Second, the gate accepts all persistent blocks and rejects all
stale blocks at the regime change, avoiding the small penalty incurred by
unconditional recycling. In the persistent regime the certified and observed
reduced spectral floors agree.

Thus the relevant outcome is not a small win in mean iteration count.  The
quotient--Schur gate matches standard recycling on persistent systems, avoids
its stale-subspace penalty at the regime change, and attaches explicit
representation and spectral conditions to that decision.  It is therefore a
competitive alternative with stronger diagnostics and guarantees on this
controlled family, rather than evidence of universal superiority.

The example is deliberately an exact invariant-subspace test. If the useful
subspace drifts, transporting an ordinary Euclidean Ritz-residual angle into
the energy geometry can introduce a condition-number factor and make the
bound vacuous. Thus \Cref{tab:recycledcg} validates the quotient and spectral
roles without concealing this open sharpness problem. The complete rows are
supplied with the computational artifacts described in
\Cref{sec:audit}.

\paragraph{A secondary compositional vignette.}
The same conditional-innovation calculation can be applied to first-order
output changes of a feedforward network.  Because finite parameter changes
introduce curvature and statistical questions absent from the Hilbert-space
theorem, this experiment is not part of the principal validation.  The
construction, matched ablation, and limitations are recorded compactly in
\Cref{sec:slimmantra}.

\subsection{Summary of numerical evidence}

The experiments validate four distinct claims.  First, quotienting removes
exactly null coefficient directions without changing the represented
projector: both lifted examples have singular labelled Gram matrices, while
normalized label splitting leaves the computed projector unchanged to
$6.1\times10^{-15}$.  Second, local residual and elimination estimates predict
global stability: every observed quotient-Gram eigenvalue exceeds the
corresponding theorem lower bound, and every reported reconstruction or
truncation error lies within its predicted envelope.  Third, the Schur gain
used by \Cref{alg:ssr} predicts the decrease in squared approximation error;
the largest discrepancy over the adaptive paths is
$6.94\times10^{-18}$.  Fourth, stable innovation is not confused with solver
utility: the recycled-CG test combines the quotient certificate with the
classical reduced-spectrum condition, matches standard recycling on persistent
systems, and rejects the stale subspace at the regime change.

\Cref{tab:numericalsummary} collects the two direct stability calculations
used in the paper.  The first two compare the computable theorem bound with the actual
smallest eigenvalue of a large finite Gram matrix; the eigenvalue is reported
only for validation and is not used to construct the bound.  Both classical
row-sum estimates are negative, whereas the quotient--Schur bounds are
positive and every measured truncation error in
\Cref{tab:nonstationaryexample,tab:liftedhaar} lies below its predicted upper
envelope.  The adaptive calculation in \Cref{tab:sradaptive} separately tests
exact target-error attribution.  The recycled-CG calculation in
\Cref{tab:recycledcg} tests the distinct roles of quotient stability and
spectral utility in a standard Galerkin solver.

\begin{table}[H]
\centering
\small
\caption{Numerical evidence for utility of the general criterion.  ``Observed''
is the smallest eigenvalue at the largest reported finite truncation.}
\label{tab:numericalsummary}
\begin{tabular}{@{}lrrrl@{}}
\toprule
Example & Classical lower & Theorem lower & Observed & Additional check\\
\midrule
Alternating recurrence & $-3.538462$ & $0.111111$ & $0.153851$
 & truncation bound holds\\
Lifted Haar, level 9 & $-2.013637$ & $0.303438$ & $1.000000$
 & truncation bound holds\\
\bottomrule
\end{tabular}
\end{table}

The numerical evidence is intentionally scoped.  It shows that the new
criterion can be non-vacuous where diagonal dominance fails, that its exact
gain identity predicts adaptive approximation error, and that stable
innovation must be separated from solver utility.  It does not establish optimality of the constants, a universal
approximation rate, or general optimizer superiority.

\section{Conclusion}

Successively enlarged approximation spaces pose two coupled problems: exact
coefficient redundancy can masquerade as instability, while interactions among
genuinely new blocks can accumulate as the space grows.  Testing the complete
Gram operator after every enlargement does not explain which level causes the
problem and offers no incremental approximation rule.

This paper resolves that structural difficulty by combining the global
synthesis quotient with successive orthogonal innovations.  Their
compatibility yields a block $LDL^*$ factorization and depth-independent
Riesz bounds from local innovation and elimination estimates.  The same Schur
residual identifies intrinsic new dimension and gives the exact reduction in
squared best-approximation error.  This joint stability-and-gain statement is
the central novelty: it turns quotient--Riesz analysis from a global diagnostic
into a constructive numerical approximation method.  The nonstationary and
lifted examples show that its bounds remain positive where labelled Gram and
diagonal-dominance tests fail; the adaptive and recycled-subspace examples
separate stable representation from application-specific utility.

The numerical examples confirm the broader principle behind these results.
Redundant generators can be retained for structure, localization, or
expressiveness without allowing their labels to distort stability; each new
approximation level can be evaluated relative to everything already
represented; and the same local calculation can quantify both its conditioning
and its contribution to approximation.  The resulting decisions remain
competitive with the corresponding classical constructions while carrying
information---intrinsic dimension, stability, and exact error
attribution---that those constructions do not ordinarily expose together.

This suggests a general route for designing approximation methods that grow by
successive enrichment.  Rather than prescribe a globally nonredundant basis in
advance or repeatedly condition the complete representation, one may generate
structured candidates freely and admit only their stable, genuinely new
content.  The idea applies beyond the examples studied here: to multilevel and
adaptive approximation, operator-generated bases, Galerkin hierarchies,
recycled numerical subspaces, and function families produced by composition.
The next challenge is to make these intrinsic tests scalable and to extend
their convergence theory to evolving and nonlinear approximation spaces.  In
that form, successive Schur--Riesz analysis could provide a common mathematical
foundation for constructing, interpreting, and controlling a broad class of
modern approximation architectures.

\bibliographystyle{plain}
\bibliography{references}

\appendix
\numberwithin{equation}{section}
\numberwithin{table}{section}

\section{Secondary consequences and elementary tests}
\label{app:secondary}

This appendix collects a technical proof and
useful specializations that are not needed to follow the central argument.

\begin{table}[H]
\centering
\small
\caption{Relation of the contribution to established ingredients.}
\label{tab:novelty}
\begin{tabular}{@{}p{0.24\textwidth}p{0.30\textwidth}p{0.36\textwidth}@{}}
\toprule
Established theory & What it already provides & Additional result proved here\\
\midrule
Riesz and fusion-frame theory & Stability of a prescribed family or subspace
decomposition & Canonical coefficient quotient and sequential Schur theory for
successive generating operators\\
Generalized Schur complements & One-step elimination and residual operators &
Compatible infinite block factorization with approximation, truncation,
perturbation, and error-decomposition consequences\\
Block diagonal dominance & Simple raw cross-Gram sufficient bounds & Local
conditional-regression, angle, and symbol criteria permitting strong raw
interactions\\
Nonstationary subdivision and lifting & Convergence, regularity, and flexible
multiresolution construction & A computable quotient-Riesz lower bound and
predictive truncation bound for a level-dependent lifted family\\
Greedy and reduced-basis approximation & Target-driven enrichment and
convergence under dictionary or width assumptions & Exact redundant-block
quotient, conditional innovation audit, and simultaneous update of global
Riesz stability\\
Augmented and recycled Krylov methods & Reuse of historical subspaces to
accelerate sequences of linear solves & Representation-invariant stability
audit; spectral acceleration remains classical and separate\\
\bottomrule
\end{tabular}
\end{table}

\subsection{Proof of the structural elimination criteria}
\label{app:structuralproofs}

For the factorized-update criterion in
\Cref{prop:structuralelimination}(i), submultiplicativity and
$1+x\leq e^x$ give
\[
 \norm{L_m}\leq\prod_j(1+\norm{K_{m,j}})
 \leq\exp\Bigl(\sum_j\norm{K_{m,j}}\Bigr)\leq e^{C_+}.
\]
Each factor is invertible by its Neumann series and
$\norm{(I+K_{m,j})^{-1}}\leq(1-\norm{K_{m,j}})^{-1}$.
Reversing the product and taking logarithms gives the inverse estimate.

For the causal-symbol criterion in
\Cref{prop:structuralelimination}(ii), Young's convolution inequality gives
$\norm L\leq\alpha$ and $\norm{L^{-1}}\leq\beta$.  Causality makes the
inverse of a leading finite section the corresponding leading section of the
causal inverse, and compression cannot increase operator norm.  For a matrix
polynomial, absence of zeros of $\det\mathcal L$ on the closed unit disk makes
$\mathcal L^{-1}$ analytic on $|z|<r$ for some $r>1$; Cauchy's estimate then
gives $\norm{B_j}\leq Cr^{-j}$.

\subsection{Additional consequences}

\begin{corollary}[Symmetric coercive Galerkin hierarchies]
\label{cor:galerkin}
Let $a(\cdot,\cdot)$ be symmetric, continuous, and coercive on $\mathcal H$,
with $\norm v_a^2=a(v,v)$.  If nested conforming spaces
$V_m=S^{(m)}(E^{(m)})$ satisfy \Cref{thm:quotientinnovation} in this energy
inner product, their quotient coefficients obey the corresponding energy-norm
Riesz bounds.  If $u_m\in V_m$ is the Galerkin approximation of $u$, then
\[
 \norm{u-u_m}_a^2=\norm{u-P_Vu}_a^2
 +\sum_{\ell>m}\norm{u_\ell-u_{\ell-1}}_a^2,
 \qquad V=\overline{\bigcup_mV_m}.
\]
\end{corollary}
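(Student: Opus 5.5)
The plan is to reduce the statement to \Cref{thm:quotientinnovation} and \Cref{cor:approximation} by replacing the ambient inner product of $\mathcal H$ with the energy inner product $a(\cdot,\cdot)$. First we check that this change is legitimate. Since $a$ is symmetric, continuous and coercive, $\norm{\cdot}_a$ is equivalent to the original norm. So $(\mathcal H,a)$ is a Hilbert space with the same closed subspaces and the same bounded operators. Each $S_\ell$ remains bounded into $(\mathcal H,\norm{\cdot}_a)$, and each $V_m$ remains closed. The hypothesis says that \Cref{lem:compatibility} and \Cref{thm:quotientinnovation} are applied in this geometry. Concretely, $P_{m-1}$ becomes the $a$-orthogonal projector, $R_m=(I-P_{m-1})S_m$ and $D_m=R_m^*R_m$ are formed with $a$-adjoints, and the bounds $\delta,\Delta,M,K$ hold for the resulting factorization. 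Invoking \Cref{thm:quotientinnovation} verbatim then gives
\[
 \frac{\delta}{K^2}\norm{[c]}^2\le\norm{S^{(m)}[c]}_a^2\le\Delta M^2\norm{[c]}^2 ,
\]
which is the energy-norm Riesz statement. Nothing beyond relabeling the Hilbert space is needed here.

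For the error identity, the key step is that the Galerkin approximation coincides with the $a$-orthogonal projection. The Galerkin approximation $u_m\in V_m$ is defined by $a(u_m,v)=a(u,v)$ for all $v\in V_m$, where the right-hand side is the data functional $F(v)$. This says exactly that $u-u_m\perp_a V_m$, so $u_m=P_mu$ with $P_m$ the $a$-orthogonal projector onto $V_m$. Conformity ($V_m\subseteq\mathcal H$) and symmetry of $a$ are precisely what make this Galerkin orthogonality an orthogonal projection characterization. We then apply part (iii) of \Cref{cor:approximation} in the $a$-geometry with $f=u$, taking $P_0=0$ and hence $u_0=0$. This gives
\[
 \norm{u-P_mu}_a^2=\norm{u-P_Vu}_a^2+\sum_{\ell>m}\norm{P_\ell u-P_{\ell-1}u}_a^2 ,
\]
and substituting $P_\ell u=u_\ell$ yields the displayed formula. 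The closure $V=\overline{\bigcup_m V_m}$ is the same in either norm because the two norms are equivalent.

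The only real point of care, and the step I would write out explicitly, is this identification $u_m=P_mu$ together with the convergence used in (iii). Nested closed subspaces have $a$-orthogonal projectors converging strongly to $P_V$. The increments $u_\ell-u_{\ell-1}$ lie in $V_\ell\ominus_a V_{\ell-1}$, so they are mutually $a$-orthogonal. Telescoping $u-u_m=(u-P_Vu)+\sum_{\ell>m}(u_\ell-u_{\ell-1})$ and applying Pythagoras, followed by passage to the limit, gives the identity. Everything else is inherited directly from the earlier results.
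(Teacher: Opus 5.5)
Your proposal is correct and follows essentially the paper's own argument: run \Cref{thm:quotientinnovation} in the energy inner product, use Galerkin orthogonality to identify $u_m$ with the $a$-orthogonal projection $P_mu$, and then apply part (iii) of \Cref{cor:approximation}. Your explicit check that continuity and coercivity make $\norm{\cdot}_a$ equivalent to the original norm (so completeness, closedness of each $V_m$, boundedness of each $S_\ell$, and $V=\overline{\bigcup_m V_m}$ are unchanged) supplies detail that the paper's two-line proof leaves implicit.
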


\begin{proof}
Use the energy inner product in \Cref{thm:quotientinnovation}.  Galerkin
orthogonality identifies $u_m$ with the energy projection, after which
\Cref{cor:approximation} applies.
\end{proof}

\begin{corollary}[Block diagonal dominance]
\label{thm:block}
For finite block functions $g_\ell\in L^2(\nu;\mathbb R^{d_\ell})$, write
$G_{\ell k}=\mathbb E_\nu[g_\ell g_k^\top]$.  If
$A_\ell I\preceq G_{\ell\ell}\preceq B_\ell I$ and
$\gamma_{\ell k}=\norm{G_{\ell k}}_{\mathrm{op}}$, then block synthesis has
lower and upper bounds
\[
 \min_\ell\left(A_\ell-\sum_{k\ne\ell}\gamma_{\ell k}\right),\qquad
 \max_\ell\left(B_\ell+\sum_{k\ne\ell}\gamma_{\ell k}\right).
\]
\end{corollary}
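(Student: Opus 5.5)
The plan is to compare the quadratic form of the full block Gram operator $G$ (with blocks $G_{\ell k}$) against its block-diagonal part, and to bound the off-diagonal remainder by a Schur-type test. Let $c=(c_1,\dots,c_m)$ be finitely supported, with $c_\ell\in\mathbb R^{d_\ell}$. The synthesized function $\sum_\ell c_\ell^\top g_\ell$ has squared $L^2(\nu)$ norm
\[
 \ip{Gc}{c}=\sum_\ell\ip{G_{\ell\ell}c_\ell}{c_\ell}+\sum_{\ell\ne k}\ip{G_{\ell k}c_k}{c_\ell}.
\]
The diagonal part lies between $\sum_\ell A_\ell\norm{c_\ell}^2$ and $\sum_\ell B_\ell\norm{c_\ell}^2$ by hypothesis.

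For the cross terms I will use Cauchy--Schwarz blockwise and then the inequality $2xy\le x^2+y^2$:
\[
 \Bigl|\sum_{\ell\ne k}\ip{G_{\ell k}c_k}{c_\ell}\Bigr|
 \le\sum_{\ell\ne k}\gamma_{\ell k}\norm{c_k}\norm{c_\ell}
 \le\tfrac12\sum_{\ell\ne k}\gamma_{\ell k}\bigl(\norm{c_\ell}^2+\norm{c_k}^2\bigr).
\]
Since $G_{k\ell}=G_{\ell k}^\top$, we have $\gamma_{k\ell}=\gamma_{\ell k}$. Therefore the two halves coincide after relabelling, and the bound equals $\sum_\ell\bigl(\sum_{k\ne\ell}\gamma_{\ell k}\bigr)\norm{c_\ell}^2$. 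Combining this with the diagonal estimate gives
\[
 \sum_\ell\Bigl(A_\ell-\sum_{k\ne\ell}\gamma_{\ell k}\Bigr)\norm{c_\ell}^2\le\ip{Gc}{c}\le\sum_\ell\Bigl(B_\ell+\sum_{k\ne\ell}\gamma_{\ell k}\Bigr)\norm{c_\ell}^2 .
\]
Taking the minimum and maximum of the coefficients over $\ell$ then yields the stated lower and upper bounds, with $\norm c^2=\sum_\ell\norm{c_\ell}^2$.

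The only step needing care is this symmetric use of the row sums, which is where $\gamma_{\ell k}=\gamma_{k\ell}$ is used. Equivalently, it is the block Schur test with equal row and column sums, and it lets the bound be stated with row sums alone. If infinitely many blocks are present, I will pass to the limit from finitely supported sequences, as in Step~4 of \Cref{thm:quotientinnovation}. This applies whenever the maximum in the upper bound is finite. If the lower constant is nonpositive, the statement is vacuous, which is exactly the failure mode illustrated in \Cref{tab:nonstationaryexample}.
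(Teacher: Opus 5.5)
Your proof is correct and follows the paper's argument: expand the block Gram form, bound each cross term by $\gamma_{\ell k}\norm{c_\ell}\norm{c_k}$, apply $2xy\le x^2+y^2$, and collect row sums. Your explicit use of $\gamma_{k\ell}=\gamma_{\ell k}$ (from $G_{k\ell}=G_{\ell k}^\top$) to merge the two halves is the detail the paper's one-line proof leaves implicit.
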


\begin{proof}
Expand the block Gram form, use
$2\norm{a_\ell}\norm{a_k}\leq\norm{a_\ell}^2+\norm{a_k}^2$, and collect
row sums.  This familiar test is only a comparator: it is basis dependent and
may be negative for a stable, strongly interacting family.
\end{proof}

For a finite old/new split, the residual Gram operator reduces to the standard
Schur complement.  If $G=\mathbb E[gg^\top]$ is positive definite,
$C=\mathbb E[gv^\top]$, and $D=\mathbb E[vv^\top]$, then
\[
 \inf_a\norm{b^\top v-a^\top g}_{L^2(\nu)}^2
 =b^\top(D-C^\top G^{-1}C)b.
\]
This identity follows by solving the normal equation $Ga=Cb$ and is the
finite-dimensional prototype of the conditional innovation used in the main
theorem.

\begin{corollary}[Unitary symmetries]
\label{prop:equivariance}
Suppose unitary maps $U_\gamma$ on $\mathcal H$ and $Q_{\gamma,\ell}$ on
$E_\ell$ satisfy
\begin{equation}
 U_\gamma S_\ell=S_\ell Q_{\gamma,\ell}.
 \label{eq:intertwining}
\end{equation}
Then the actions
preserve the global and local null spaces, descend to their quotient spaces,
and satisfy
\[
 R_\ell Q_{\gamma,\ell}=U_\gamma R_\ell,
 \qquad Q_{\gamma,\ell}^*D_\ell Q_{\gamma,\ell}=D_\ell.
\]
Consequently the quotient Riesz bounds are unchanged by the symmetry.
\end{corollary}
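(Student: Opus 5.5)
We need to prove Corollary (Unitary symmetries). Assume the intertwining relation \eqref{eq:intertwining} holds at every level $\ell$, with $Q_\gamma=\operatorname{diag}(Q_{\gamma,1},\ldots,Q_{\gamma,m})$ the induced unitary on $E^{(m)}$. Then $U_\gamma S^{(m)}=S^{(m)}Q_\gamma$.

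The first step is the global null space. If $S^{(m)}c=0$, then $S^{(m)}Q_\gamma c=U_\gamma S^{(m)}c=0$, so $Q_\gamma$ maps $\ker S^{(m)}$ into itself. Applying the same argument to $\gamma^{-1}$ gives equality; I will assume the symmetries form a group, or at least that $U_\gamma^*,Q_{\gamma,\ell}^*$ intertwine as well, which follows by taking adjoints only if unitarity and invertibility are used. Concretely, $U_\gamma^*S_\ell=U_\gamma^*S_\ell Q_{\gamma,\ell}Q_{\gamma,\ell}^*=S_\ell Q_{\gamma,\ell}^*$, so the adjoints intertwine automatically. Since a unitary preserving a closed subspace also preserves its orthogonal complement, $Q_\gamma$ descends isometrically to $\widehat E^{(m)}=(\ker S^{(m)})^\perp$.

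The second step is invariance of the nested spaces and the residual relation. The intertwining gives $U_\gamma V_{\ell-1}\subseteq V_{\ell-1}$, and the same holds for $U_\gamma^*$; hence $U_\gamma V_{\ell-1}=V_{\ell-1}$. A unitary that leaves a closed subspace invariant together with its adjoint commutes with the orthogonal projection onto it, so $U_\gamma P_{\ell-1}=P_{\ell-1}U_\gamma$. Therefore
\[
R_\ell Q_{\gamma,\ell}=(I-P_{\ell-1})S_\ell Q_{\gamma,\ell}=(I-P_{\ell-1})U_\gamma S_\ell=U_\gamma R_\ell .
\]
It follows that $Q_{\gamma,\ell}^*D_\ell Q_{\gamma,\ell}=(R_\ell Q_{\gamma,\ell})^*R_\ell Q_{\gamma,\ell}=R_\ell^*U_\gamma^*U_\gamma R_\ell=D_\ell$. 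The relation $D_\ell Q_{\gamma,\ell}=Q_{\gamma,\ell}D_\ell$ shows that $\ker D_\ell$ and its orthogonal complement are invariant, so $Q_{\gamma,\ell}$ descends to the local quotient. This commutation of $U_\gamma$ with $P_{\ell-1}$ is the only nontrivial point of the proof, and it rests on the two-sided invariance of $V_{\ell-1}$.

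The last step concerns the Riesz bounds. The conjugation identity shows that $D_\ell$ and its restriction to the quotient are unitarily equivalent under the symmetry, so $\delta$ and $\Delta$ are unchanged. For the elimination factor, $C_m$ is defined canonically from $S^{(m)}$ and the maps $\widehat R_\ell$. Both of these intertwine the block-unitary $Q_\gamma$ with $U_\gamma$, so by the uniqueness in \Cref{lem:compatibility}, $C_m$ commutes with $Q_\gamma$ on the quotients. Hence $\norm{L_m}$ and $\norm{L_m^{-1}}$ are unchanged. More directly, $\norm{S^{(m)}Q_\gamma c}=\norm{S^{(m)}c}$ and $\norm{Q_\gamma c}=\norm{c}$, so the optimal quotient Riesz constants in \eqref{eq:quotientriesz} are invariant.
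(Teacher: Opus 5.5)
Your proof is correct and follows the same route as the paper: the intertwining makes $V_{\ell-1}$ invariant under $U_\gamma$ and $U_\gamma^*$, so $U_\gamma$ commutes with $P_{\ell-1}$; this passes to $R_\ell$ and $D_\ell$, and unitary invariance of the Rayleigh quotients leaves the Riesz constants unchanged. You also make explicit the two-sided invariance that the paper's one-line proof leaves implicit; one small correction is that $C_m$ \emph{intertwines} the two induced quotient actions rather than commuting with a single $Q_\gamma$, since it maps between different spaces.
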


\begin{proof}
The intertwining relation preserves every $V_m$ and hence commutes with its
orthogonal projector.  It therefore passes to $R_\ell$ and $D_\ell$; unitary
conjugation preserves their Rayleigh quotients.
\end{proof}

\begin{corollary}[Families generated by successive maps]
\label{cor:nonlinearcomposition}
Let $h_\ell=T_\ell(h_{\ell-1})$ and let the components of
$g_\ell=\phi_\ell(h_\ell)$ belong to $L^2(\nu)$.  Defining
$S_\ell c_\ell=c_\ell^\top g_\ell$ places the resulting linear function
expansions within \Cref{thm:quotientinnovation} whenever its innovation and
elimination hypotheses hold.  The theorem requires no stationarity, weight
tying, or Toeplitz structure.
\end{corollary}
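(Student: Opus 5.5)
The plan is to reduce the statement to \Cref{thm:quotientinnovation} by exhibiting the factorization \eqref{eq:ldl} explicitly. Let $E_\ell=\mathbb R^{d_\ell}$ and $S_\ell c_\ell=c_\ell^\top g_\ell$, where $g_\ell=\phi_\ell(h_\ell)$ and $h_\ell=T_\ell(h_{\ell-1})$. These maps are linear in the coefficients even though $T_\ell$ and $\phi_\ell$ are nonlinear.

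\emph{Boundedness and closedness.} Each $S_\ell$ is bounded with $\norm{S_\ell}^2\leq\lambda_{\max}(G_{\ell\ell})$, where $G_{\ell\ell}=\mathbb E_\nu[g_\ell g_\ell^\top]$. This is finite because the components of $g_\ell$ lie in $L^2(\nu)$. Each $V_m$ is finite-dimensional, so it is closed, and the standing hypothesis of \Cref{lem:compatibility} holds automatically. That lemma then supplies the canonical objects with no further structural assumption:
\begin{itemize}
\item the residual Gram operators $D_\ell$, given by the finite Schur complement formula \eqref{eq:generalizedinnovation} with the Moore--Penrose inverse;
\item the compatible triangular isomorphisms $C_m$;
\item the factorization $\widehat G^{(m)}=L_m\mathcal D_mL_m^*$.
\end{itemize}
The point to record is that nothing in the lemma used how $S_\ell$ arose. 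The composition $T_\ell\circ\cdots\circ T_1$ only determines which functions $g_\ell$ appear, and the generating operators themselves are arbitrary bounded linear maps.

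\emph{Transfer of the conclusions.} I will then state that ``the innovation and elimination hypotheses hold'' means exactly \eqref{eq:innovationassumptions}, uniformly in $m$ and $\ell$. Under these bounds the conclusion \eqref{eq:quotientriesz} of \Cref{thm:quotientinnovation} transfers verbatim, and so does its extension to countably many levels. Truncation compatibility (Step 5 of \Cref{lem:compatibility}) ensures that the $L_m$ for different $m$ are leading sections of one operator. The uniform bounds therefore refer to a single consistent object, not to unrelated finite families.

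\emph{No stationarity needed.} I will note that neither the lemma nor the theorem invokes any relation between $T_\ell$ and $T_{\ell+1}$, any equality of the $\phi_\ell$, or any Toeplitz form of $G$. Stationarity is used only in the optional sufficient criterion \Cref{prop:structuralelimination}(ii). The alternating recurrence of \Cref{prop:alternatingexample} serves as a witness that non-Toeplitz families are covered.

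\emph{Main obstacle.} I expect the hard step to be a subtle point in identifying the hypotheses rather than a computation. The paper must make clear that the quotients are taken with respect to the linear synthesis maps $S^{(m)}$, not the nonlinear maps $T_\ell$. If some $g_\ell$ coincide or satisfy exact linear relations, those relations land in $\ker S^{(m)}$ and are removed by the quotient. The innovation lower bound $\delta$ is then imposed only on $E_\ell/\ker D_\ell$, so exact redundancy created by the composition does not violate the hypotheses.
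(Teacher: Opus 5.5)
Your proposal is correct and follows the paper's own reasoning: the paper gives no separate proof, treating the corollary as immediate because $S_\ell c_\ell=c_\ell^\top g_\ell$ is a bounded linear synthesis map with finite-dimensional (hence closed) range, and \Cref{thm:quotientinnovation} places no restriction on how the functions $g_\ell$ were generated. Your point that the quotients act on the linear maps $S^{(m)}$ rather than on the nonlinear $T_\ell$ is exactly the caveat the paper records right after the statement.
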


The corollary concerns the stability of the linear expansion after its
functions have been generated.  It does not control a finite change in the
maps $T_\ell$; such a change requires an additional nonlinear remainder
estimate.

\section{Linearized compositional vignette}
\label{sec:slimmantra}

For an ordinary feedforward network, let \(J\) be the output Jacobian on one
minibatch and let \(u_1,\ldots,u_r\) be recent parameter updates.  The map
\((a_i)\mapsto J\sum_i a_i u_i\) is a finite synthesis operator for
first-order output changes.  With
\[
 H_{<i}=[Ju_1,\ldots,Ju_{i-1}],\quad
 G_{<i}=n^{-1}H_{<i}^*H_{<i},\quad c_i=n^{-1}H_{<i}^*Ju_i,
\]
the conditional innovation is
\[
 d_i=\norm{Ju_i}_n^2-\ip{c_i}{G_{<i}^{\dagger}c_i}.
\]
A controlled realization retains \(u_i\) when
\(d_i/\norm{Ju_i}_n^2\geq0.01\), then corrects an Adam proposal inside the
retained six-step history by a damped Galerkin solve and a same-minibatch line
search.  Filtered and unfiltered variants otherwise use identical networks,
updates, damping, and globalization.

\begin{table}[H]
\centering
\small
\caption{Secondary functional-history ablation on three binary tasks and five
fixed seeds per task.  Rank is the mean retained history dimension.}
\label{tab:slimmantra}
\begin{tabular}{@{}lrrrr@{}}
\toprule
Method & Mean test MSE & Median MSE & Rank & Time (s)\\
\midrule
Adam proposal only & \(0.302962\) & \(0.205448\) & --- & \(0.01364\)\\
Unfiltered functional history & \(0.252359\) & \(0.161432\) & \(5.860\) & \(0.07736\)\\
Schur--Riesz-filtered history & \(\mathbf{0.240469}\) & \(\mathbf{0.160268}\)
 & \(\mathbf{5.300}\) & \(0.07649\)\\
\bottomrule
\end{tabular}
\end{table}

Filtering lowers mean test MSE by \(4.71\%\) relative to the matched
unfiltered correction, wins 11 of 15 paired cases, and lowers retained rank by
\(9.55\%\).  This is only a linearized mechanism check.  The theorem controls
the empirical functions \(Ju_i\), not finite nonlinear steps, population
convergence, or optimizer superiority.

\section{Stationary orbit criterion}
\label{app:stationary}

Let $(X,\Sigma,\mu)$ be a measure space, let $T:X\to X$ be measurable and
measure preserving, and let $0\ne\phi\in L^2(\mu)$ be real valued.  Set
\begin{equation}
 g_j=\frac{\phi\circ T^j}{\norm{\phi}_{L^2(\mu)}},
 \quad
 r_n=\ip{g_0}{g_n}_{L^2(\mu)}.
 \label{eq:orbitappendix}
\end{equation}
Then $\ip{g_j}{g_k}=r_{|j-k|}$.

\begin{proposition}[Absolute-correlation criterion]
\label{prop:riesz}
If $R=\sum_{n\ge1}|r_n|<1/2$, then every finitely supported sequence
$a=(a_j)$ satisfies
\[
 (1-2R)\sum_j|a_j|^2
 \le\norm{\sum_ja_jg_j}_{L^2(\mu)}^2
 \le(1+2R)\sum_j|a_j|^2.
\]
\end{proposition}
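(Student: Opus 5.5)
We expand the quadratic form directly.  For a finitely supported real or complex sequence \(a=(a_j)\), stationarity (\(\ip{g_j}{g_k}=r_{|j-k|}\), a consequence of measure preservation of \(T\)) and \(r_0=1\) give
\[
 \norm{\sum_ja_jg_j}_{L^2(\mu)}^2
 =\sum_j|a_j|^2+\sum_{j\neq k}a_j\overline{a_k}\,r_{|j-k|}.
\]
Everything therefore reduces to bounding the off-diagonal part by \(2R\sum_j|a_j|^2\) in absolute value.  The quadratic form is symmetric with Toeplitz kernel, so no spectral analysis of the full Gram matrix is needed.

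To bound the off-diagonal sum, we group the terms by the lag \(n=|j-k|\ge1\).  Each lag contributes
\[
 r_n\sum_j\bigl(a_j\overline{a_{j+n}}+a_{j+n}\overline{a_j}\bigr),
\]
with \(r_n\) real because \(\phi\) is real valued.  By Cauchy--Schwarz in \(j\), or equivalently by \(2|a_ja_{j+n}|\le|a_j|^2+|a_{j+n}|^2\), the inner sum has modulus at most \(2\sum_j|a_j|^2\).  Summing over \(n\) bounds the off-diagonal part by \(2\sum_{n\ge1}|r_n|\sum_j|a_j|^2=2R\norm a^2\).  This is simply the scalar case of \Cref{thm:block}, or Schur's test, with row sums \(\sum_{k\ne j}|r_{|j-k|}|\le2R\).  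Adding and subtracting this bound from \(\norm a^2\) gives both inequalities.  The hypothesis \(R<1/2\) is used only to make the lower constant positive.

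The only point needing care is absolute convergence of the lag sums when the sequence is finitely supported.  This is automatic, since only finitely many terms are nonzero, and \(R<\infty\) makes the bound uniform in the support length.  No real obstacle is expected.

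As a remark, one could alternatively bound the symbol \(1+2\sum_{n\ge1}r_n\cos(n\theta)\) between \(1-2R\) and \(1+2R\).  The row-sum argument is shorter and suffices here.
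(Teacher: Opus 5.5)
Your proof is correct, but it takes a different route from the paper's. The paper passes to the Toeplitz symbol. With $A(e^{i\theta})=\sum_ja_je^{ij\theta}$ and $s(\theta)=1+2\sum_{n\ge1}r_n\cos(n\theta)$, it writes $\norm{\sum_ja_jg_j}^2=\frac1{2\pi}\int_0^{2\pi}s(\theta)\lvert A(e^{i\theta})\rvert^2\,d\theta$, bounds $1-2R\le s\le 1+2R$ pointwise by absolute convergence, and finishes with Parseval; this is the alternative in your closing remark. You instead bound the off-diagonal part lag by lag with $2\lvert a_ja_{j+n}\rvert\le\lvert a_j\rvert^2+\lvert a_{j+n}\rvert^2$. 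That is the Schur (Gershgorin) row-sum test, i.e.\ the scalar case of \Cref{thm:block}. Your bookkeeping is right: $r_0=1$, $r_n$ is real, and each lag contributes at most $2\lvert r_n\rvert\norm{a}^2$. It yields the same constants $1\pm2R$.

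The trade-off is this. Your argument is fully elementary, and in its row-sum form it does not need stationarity at all: any family with unit diagonal and $\sup_j\sum_{k\ne j}\lvert\ip{g_j}{g_k}\rvert\le 2R$ obeys the same bounds. However, it discards the signs of the $r_n$. The symbol argument uses the Toeplitz structure essentially, and at no extra cost it gives the sharper constants $\min_\theta s(\theta)$ and $\max_\theta s(\theta)$. These can stay positive when $R\ge1/2$: for example, $r_1=0.45$, $r_2=0.1$ gives $R=0.55$ but $\min_\theta s(\theta)=0.3$. This mirrors the paper's broader point that raw row sums are a weaker certificate than structural information about the Gram operator.
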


\begin{proof}
The Gram matrix is Toeplitz with symbol
$s(\theta)=1+2\sum_{n\ge1}r_n\cos(n\theta)$.  Absolute convergence gives
$1-2R\le s(\theta)\le1+2R$.  If
$A(e^{i\theta})=\sum_ja_je^{ij\theta}$, then
\[
 \norm{\sum_ja_jg_j}_2^2
 =\frac1{2\pi}\int_0^{2\pi}s(\theta)|A(e^{i\theta})|^2\,d\theta.
\]
The result follows from the symbol bounds and Parseval's identity.
\end{proof}

\section{Reproducibility and audit boundary}
\label{sec:audit}

Every principal numerical claim is reproducible from the mathematical
specification in the corresponding subsection.  The recurrence and wavelet
calculations are deterministic.  Their audits compare theorem constants with
finite Gram eigenvalues and verify the exact gain identity, invariance under
normalized generator splitting, and orthonormality of retained innovations.
The recycled-CG table reports all twelve prescribed bulk realizations and all
144 systems, including every regime change.  No numerical eigenvalue used for
validation is substituted into a theorem bound, and no reported instance is
selected after inspection of its outcome.

\section{Limitations and next results}
\label{sec:limits}

Three limitations delimit the present result.  First, without localization,
factorization, or stationary structure, controlling \(L_m\) is essentially
the original growing-Gram conditioning problem; the theory does not make
dense unstructured interactions inexpensive.  Second, the exact enrichment
identity evaluates candidates already supplied by an application, but does
not construct an optimal dictionary or prove a universal approximation rate.
Third, positive Schur innovation certifies a stable new direction, not its
downstream value for every numerical solver; the recycled-CG example requires
a separate spectral-utility test.

The next theoretical target is a sharp transport theorem for slowly changing
innovation spaces.  The next numerical target is a matrix-free realization
that estimates local Schur spectra and gain without forming dense residual
Gram matrices.  For Galerkin and multilevel applications, conformity,
local approximation and inverse estimates, and solver contraction remain
application-specific obligations rather than consequences of the present
geometry.

\end{document}